\numberwithin{equation}{section}
\numberwithin{equation}{section} 
\numberwithin{figure}{section} 
\theoremstyle{plain}
\newtheorem{thm}{Theorem}[section]
\newtheorem{prop}[thm]{Proposition}
\newtheorem{cor}[thm]{Corollary}
\newtheorem{lem}[thm]{Lemma}
\theoremstyle{definition}
\newtheorem{dfn}[thm]{Definition}
\begin{document}
	
	\title{Gauss-Manin Lie algebra of mirror elliptic K3 surfaces}
	
	\author{Murad Alim}
	\address{FB Mathematik, Universit\"at Hamburg, Bundesstr. 55, 20146, Hamburg, Germany}
	\email{murad.alim@uni-hamburg.de}
	
	\author{Martin Vogrin}
	\address{FB Mathematik, Universit\"at Hamburg, Bundesstr. 55, 20146, Hamburg, Germany}
	\email{martin.vogrin@uni-hamburg.de}
	

	\maketitle
	\begin{abstract}
		We study mirror symmetry of families of elliptic K3 surfaces with elliptic fibers of type $E_6,~E_7$ and $E_8$. We consider a moduli space $\mathsf{T}$ of the mirror K3 surfaces enhanced with the choice of differential forms. We show that coordinates on $\mathsf{T}$ are given by the ring of quasi modular forms in two variables, with modular groups adapted to the fiber type.  We furthermore introduce an algebraic group $\mathsf{G}$ which acts on $\mathsf{T}$ from the right and construct its Lie algebra $\mathrm{Lie}(\mathsf{G})$. We prove that the extended Lie algebra generated by $\mathrm{Lie}(\mathsf{G})$ together with modular vector fields on $\mathsf{T}$ is isomorphic to $\mathrm{sl}_2(\mathbb{C})\oplus\mathrm{sl}_2(\mathbb{C})$.
	\end{abstract}
	\section{Introduction}
	
	Mirror symmetry was discovered within string theory identifying the complex geometry of a family of Calabi-Yau (CY) threefolds with the symplectic geometry of a mirror family of CY threefolds. This identification was formulated mathematically as an isomorphism of variations of Hodge structure (VHS), see e.g.~\cite{cox:1999} and references therein. Associated to the holomorphic Gauss-Manin connection there is a non-holomorphic $\mathcal{C}^{\infty}$ flat Gauss-Manin connection which gives the underlying moduli spaces the structure of a special K\"ahler manifold. This flat connection was given by Strominger in Ref.~\cite{Strominger:1990}, see also Ref.~\cite{Ceresole:1993} and references therein for a review as well as Freed's \cite{Freed:1997} formulation of special K\"ahler geometry.
	
	Mirror symmetry has triggered many exciting developments in mathematics and has been extended to CY manifolds in other dimensions as well as beyond CY, see Ref.~\cite{Hori:2003}. A rich extension of special geometry is given by Cecotti and Vafa's $tt^*$-geometry \cite{Cecotti:1991}, formulated mathematically in Refs.~\cite{Dubrovin:1992, Hertling:2002}. In this work we will study mirror symmetry for CY twofolds given by lattice polarized K3 surfaces as introduced by Dolgachev \cite{Dolgachev:1996}. The mirror constructions that we will use are the ones based on toric varieties as put forward by Batyrev and Borisov in Refs.~\cite{Batyrev:1994,Batyrev:1996}. A formulation of the special K\"ahler geometry in this case was given in Ref.~\cite{Alim:2014} based on $tt^*$ geometry.
	
	In the realm of Calabi-Yau threefolds, the wide mathematical interest in mirror symmetry was triggered by the predictions for the enumerative geometry of the genus 0 and arbitrary degree curves of the quintic threefold of Candelas et al. \cite{Candelas:1991}. A formulation of mirror symmetry at higher genus was given by Bershadsky, Ceccotti, Ooguri and Vafa \cite{Bershadsky:1994}, providing a recursive procedure to obtain the generating functions of higher genus curve counts using the lower genus ones. In Ref.~\cite{Yamaguchi:2004}, Yamaguchi and Yau obtained a differential ring of finitely many special functions on the moduli space of the mirror quintic and related geometries with one dimensional moduli spaces and proved that the higher genus generating functions are polynomial in the generators, this result was generalized to arbitrary CY threefolds in Ref.~\cite{Alim:2007}. The analogous differential rings for the elliptic curve were put forward in Ref.~\cite{Hosono:2008} as well as for K3 manifolds in Ref.~\cite{Alim:2014}. The differential rings of Refs.~\cite{Yamaguchi:2004,Alim:2007} were constructed from special K\"ahler geometry as well as from the Gauss-Manin connection in Ref.~\cite{Zhou:2013} and operations such as holomorphic limit and completion, analogous to operations on the ring of quasi-modular forms, were realized.
	
	In a different strand of research, the work of Movasati \cite{Movasati:20113} provided a complex geometric setting for quasi modular forms by considering the Gauss-Manin connection on the complex three dimensional moduli space $\mathsf{T}$ of elliptic curves enhanced with choices of two differential forms. Out of the vector fields on $\mathsf{T}$, an $\mathrm{sl}_2(\mathbb{C})$ Lie algebra was constructed. One of the generators of this Lie algebra is identified with the modular vector field which corresponds to Ramanujan's relations between derivatives of quasi-modular forms. The analogous procedure was continued by Movasati for the mirror quintic \cite{Movasati:20111} obtaining a holomorphic version of the differential ring of Yamaguchi and Yau. In Ref.~\cite{Alim:2016}, a synthesis of Movasati's approach and the differential rings of the special geometry of CY threefolds \cite{Yamaguchi:2004,Alim:2007} was given, putting forward a moduli space $\mathsf{T}$ of CY threefolds enhanced with choices of differential forms, together with the analogous Lie algebra $\mathfrak{G}$ which we will refer to in this work as the Gauss-Manin Lie algebra, this program was called Gauss Manin Connection in Disguies (GMCD) by the authors \cite{Alim:2016}, see also \cite{Movasati:2017} for more details. Constructions of the analogous moduli spaces $\mathsf{T}$ were given in Ref.~\cite{Alim:2014} in relation to $tt^*$-geometry of CY $d$-folds, $d=1,2,3$, as well as for Dwork families of CY $d$-folds in Ref.~\cite{Movasati:2016}, an $\mathrm{sl}_2(\mathbb{C})$ sub Lie algebra for these families was studied in Ref.~\cite{Nikdelan:2017}. It was moreover used in Ref.~\cite{Haghighat:2015} for the study of modularity of elliptic fibrations as well as for the study of Noether Lefschetz loci in Ref.~\cite{Movasati:2014}. 
	
	The focus of study of the current work is elliptically fibered lattice polarized K3 surfaces. The GMCD has been studied for particular families of lattice polarized K3's in Refs.~\cite{Doran:2014,Movasati:2016,Nikdelan:2017}. A description of the expected moduli space $\mathsf{T}$ for arbitrary lattice polarized K3's has been given in Ref.~\cite{Alim:2014}, based on $tt^*$ geometry. In this paper we will give an alternative, holomorphic construction of the moduli space $\mathsf{T}$ attached to the elliptically fibered geometries. An elliptic K3 surface is a K3 surface $X$, together with a surjective morphism $\pi:X\rightarrow\mathbb{P}^1$, such that the general fiber is an elliptic curve. The singular fibers of $\pi$ were classified in \cite{Kodaira:1963}. A particularly interesting subset of these, which are known to exhibit modular properties are projective K3 surfaces with singular fibers of type $E_6,~E_7$ and $E_8$, corresponding to elliptic singularities of the same type. In Refs.~\cite{Lian:19951,Lian:19952} Lian and Yau proved that a fundamental solution of the Picard-Fuchs equations for the mirrors of these K3 surfaces factorizes as a product of two modular forms for congruence subgroups of $SL_2(\mathbb{Z})$. The modular subgroup is given by the respective monodromy group of an elliptic fiber of the mirror. Moreover, the authors discovered an intricate relationship between the mirror map for these families and the McKay-Thompson series \cite{Lian:1996} (see also Ref.~\cite{Doran:2000}). An orthogonal approach to connect K3 periods to quasi modular forms was taken in Ref.~\cite{Yang:2007}, where the authors started with certain quasi-modular forms in two parameters and constructed K3 surfaces for which these modular forms are realized as classical periods.

	In this work, we will construct the moduli space $\mathsf{T}$ from the data of the holomorphic Gauss-Manin connection of the middle dimensional cohomology of the mirrors of the elliptically fibered K3 manifolds. We show that $\mathsf{T}$ is 6-dimensional in accordance with its general construction based on special geometry of Ref.~\cite{Alim:2014}. Away from the discriminant locus $\mathsf{T}$ is a locally ringed space with local rings $\mathcal{O}_\mathsf{T}$.  We will show that there is an isomorphism
	\begin{equation}
		\mathcal{O}_{\mathsf{T}}\cong \widetilde{\mathsf{M}}(\Gamma_0(N)\times\Gamma_0(N)),\label{eq:iso}
	\end{equation}
	between the local ring $\mathcal{O}_{\mathsf{T}}$ and the graded ring of quasi-modular forms of the modular subgroup $\Gamma_0(N)$ in two variables. The level $N$ of the congruence subgroup is determined by the type of elliptic fiber, as explained in Section \ref{sec:vhs}. Moreover we will construct the Gauss-Manin Lie algebra $\mathfrak{G}$ attached to $\mathsf{T}$ and prove in Theorem \ref{thm:mainthm} that there is an isomorphism:
	\begin{equation}
		\mathfrak{G}\cong \mathrm{sl}_2(\mathbb{C})\oplus \mathrm{sl}_2(\mathbb{C}).
	\end{equation}

	This paper is organized as follows; In section \ref{sec:ms} we review the setting of this work including the description of the moduli space $\mathsf{T}$ and mirror symmetry for projective elliptic K3 surfaces following to large extent existing literature \cite{Hosono:2000,Candelas:1993,Candelas:1994,Hosono:1993,Hosono:1994}. We construct the variation of Hodge structure for these, following \cite{cox:1999} in the first part. In section \ref{sec:vhs} we introduce the notion of algebraic variation of Hodge structure and we construct explicit coordinates on enlarged moduli spaces $\mathsf{T}$. We show that the local ring $\mathcal{O}_{\mathsf{T}}$ is isomorphic to the ring of quasi-modular forms in two variables. In the fourth section we construct the algebraic group $\mathsf{G}$ on $\mathsf{T}$ and compute its Lie algebra. We show that the Lie algebra $\mathrm{Lie}(\mathsf{G})$ extended by the Ramanujan vector fields $\mathsf{R}_a$, introduced in \eqref{eq:modvf}, is isomorphic to $\mathrm{sl}_2(\mathbb{C})\oplus\mathrm{sl}_2(\mathbb{C})$.
	
	\subsection*{Acknowledgements}
	We would like to thank Florian Beck for comments on the manuscript. M.V. would like to thank the Yau Mathematical Sciences Center at Tsinghua University, where part of this work was carried out, for hospitality and Babak Haghighat and Si Li for helpful discussions. This research is supported by DFG Emmy-Noether grant on ”Building blocks of physical theories from the geometry of quantization and BPS states”, number AL 1407/2-1.

	\thanks
	
	\section{Mirror symmetry for elliptic K3 surfaces}\label{sec:ms}
	
	\subsection{Lattice polarized K3 surfaces}
	In this section we give a short review of mirror symmetry for lattice polarized projective K3 surfaces following to a large extent \cite{Dolgachev:1996,Dolgachev:2013,Hosono:2000}. A lattice polarized K3 surface is defined by an even, non-degenerate lattice $M$ of signature $(1,19-m)$ that admits a primitive embedding into the K3 lattice $\Lambda_{K3}=E_8(-1)\oplus E_8(-1)\oplus H^{\oplus 3}$ where $H$ represents the rank two hyperbolic lattice. An $M$-polarized K3 surface is a K3 surface $X$ whose Picard lattice $\rm{Pic}(X)$ is given by $M$. The orthogonal complement of $M$ in $\Lambda_{K3}$ gives the transcendental lattice of the K3 surface and, up to a factor of $H$, the lattice of the mirror K3. Consider a mirror pair $X_\Delta,X_{\Delta^\circ}$ of projective K3 surfaces described by dual three-dimensional reflexive, integral polytopes $(\Delta,\Delta^\circ)$, as introduced in \cite{Batyrev:1994,Batyrev:1996}, and denote by $\mathbb{P}_\Delta$ and $\mathbb{P}_{\Delta^\circ}$ the ambient toric varieties of $X_\Delta$ and $X_{\Delta^\circ}$ respectively. The polarization of the two surfaces is given by the pull-back of toric divisors, together with the divisors that arise from possible splitting of the simple divisors intersected with the hypersurfaces into several irreducible components. A toric Picard lattice is defined as $\mathrm{Pic}_{tor}(\Delta)=\iota^*A^1(X_\Delta)$, where $\iota: X_\Delta\rightarrow \mathbb{P}_\Delta$ denotes the embedding of the K3 surface into the toric variety and $A^1(\mathbb{P}_\Delta)$ is the first Chow group of the toric variety. A mirror of a lattice polarized K3 surface $(X_\Delta,\mathrm{Pic}_{tor}(\Delta))$ is a lattice polarized K3 surface $(X_{\Delta^\circ},\mathrm{Pic}_{cor}(\Delta^\circ))$, where $\mathrm{Pic}_{cor}(\Delta^\circ)$ is the orthogonal complement of $\mathrm{Pic}_{tor}(\Delta^\circ)$ in $H^2(X,\mathbb{Z})$, see e. g. \cite{Rohsiepe:2004}.

	\subsection{Realization of elliptic K3 surfaces as hypersurfaces in weighted projective spaces}
	Elliptic K3 surfaces with singular fibres of types $E_6,~E_7$ and $E_8$ can be obtained as hypersurfaces of degrees $d_6=6,~d_7=8$ and $d_8=12$ in weighted projective spaces $\mathbb{P}(2,2,1,1)$, $\mathbb{P}(4,2,1,1)$ and $\mathbb{P}(6,4,1,1)$. Explicitly, they are given as the zero loci of Fermat polynomials of the form
	\begin{equation}\label{eq:eq1}
		\left\{f(x)=x_1^{d/w_1}+x_2^{d/w_2}+x_3^d+x_4^d=0\right\}\subset\mathbb{P}(w_1,w_2,1,1),
	\end{equation}
	where $x_i$ denote the homogeneous coordinates in $\mathbb{P}(w_1,w_2,1,1)$. In all three cases there is a singular locus along $x_3=x_4=0$ of the torus action resulting in the singular curve $\mathcal{C}:x_1^{d/w_1}+x_2^{d/w_2}=0$. The singularity in the ambient space can be resolved by introducing a linear relation $x_4=\lambda x_3$. This defines an exceptional divisor $E$, which is a ruled surface over the curve $\mathcal{C}$. The resulting geometry is a K3 surface which is a double cover of the associated elliptic curves by the map $(x_1,x_2,x_3,x_4)\mapsto (\lambda=x_3/x_4;x_1,x_2,y_3=x_3^2)$, branched over the elliptic curve. Elliptic fibers of type $E_6,~E_7$ and $E_8$ at a general point are given by hypersurfaces $\mathbb{P}(1,1,1)[3],~ \mathbb{P}(2,1,1)[4]$ and $\mathbb{P}(3,2,1)[6]$, where the degree of the hypersurface is indicated in the square brackets. The monodromy groups of these elliptic curves are genus zero congruence subgroups of $SL_2(\mathbb{Z})$. We define them in Appendix A and introduce quasi-modular forms for them. The congruence subgroups are $\Gamma_0(3)$ for elliptic curve of type $E_6$, $\Gamma_0(2)$ for elliptic curve of type $E_7$ and $\Gamma_0(1)$ for elliptic curve of type $E_8$. The polytopes $\Delta=\{D_i\}, i\in\{0,1,2,3,4,5\}$ for elliptic K3 surfaces in \eqref{eq:eq1} are given in Table \ref{tab:tab1}. The vectors $l^{(i)}$ of linear relations between the polytopes generate the Mori cone in the secondary fan of $\mathbb{P}_{\Delta^\circ}$.
	\begin{table}
		\centering
		\begin{tabular}{c|c c c c| c c}
			&&&&&$l^{(1)}$&$l^{(2)}$\\
			$D_0$&1&0&0&0&$-(w_1+w_2)/2-1$&0\\
			$D_1$&1&0&0&1&$w_1/2$&0\\
			$D_2$&1&0&1&0&$w_2/2$&0\\
			$D_3$&1&1&$w_2/2$&$w_1/2$&0&1\\
			$D_4$&1&-1&$w_2/2$&$w_1/2$&0&1\\
			$D_5$&1&0&$w_2/2$&$w_1/2$&1&-2
		\end{tabular}
		\caption{}\label{tab:tab1}
	\end{table}
	The intersection numbers can be computed from the toric data. Let $L$ be the linear system generated by degree one polynomials $x_3$ and $x_4$, and let $H$ be the linear system generated by degree 2 polynomials $x_3^2,~x_3x_4,~x_4^2,~x_1$ and $x_2$. It is straightforward to compute the intersection numbers $C_{LL}=L\cdot L=0,~C_{LH}=C_{HL}=H\cdot L=2d/(w_1 w_2),~C_{HH}=H\cdot H= 4d/(w_1 w_2)$.
	
	The mirror variety $X_{\Delta^\circ}$ is constructed by the Batyrev-Borisov construction \cite{Batyrev:1996} as a resolution of the quotient
	\begin{equation}\label{eq:mirror}
		\left\{f_{\phi,\psi}(y)=y_1^{d/w_1}+y_2^{d/w_2}+y_3^d+y_4^d-d\psi y_1y_2y_3y_4-2\phi y_3^{d/2}y_4^{d/2}=0\right\}\slash G,
	\end{equation}
	in $\mathbb{P}(w_1,w_2,1,1)$ with coordinates $y_1, y_2, y_3, y_4$ and $d$ is the degree of $f_{\phi,\psi}(y)$. The discrete groups $G$ are $(\mathbb{Z}/3\mathbb{Z})\times(\mathbb{Z}/2\mathbb{Z})$, $(\mathbb{Z}/4\mathbb{Z})\times(\mathbb{Z}/2\mathbb{Z})$ and $(\mathbb{Z}/3\mathbb{Z})\times(\mathbb{Z}/3\mathbb{Z})$ for the three K3 surfaces in the usual order. The locus $f_{\psi,\phi}(y)=0$ defines a family $\mathcal{X}$ of lattice polarized projective K3 surfaces over the moduli space $\mathsf{B}$ of dimension 2 for which $\psi$ and $\phi$ provide a local coordinate chart. The polarization is given, up to a a rank two hyperbolic lattice, by the orthogonal complement of the toric divisors defined by $\{D_i\}$ in the K3 lattice $\Lambda_{K3}$. For later purposes it will be useful to write down a general polynomial of the form \eqref{eq:mirror}
	\begin{equation}\label{eq:mirror2}
		f_{a_1,\ldots,a_5}(y)=a_1y_1^{d/w_1}+a_2y_2^{d/w_2}+a_3y_3^d+a_4y_4^d+a_0 y_1y_2y_3y_4+a_5 y_3^{d/2}y_4^{d/2}.
	\end{equation}
	It is equivalent to the form \eqref{eq:mirror} by a projective transformation of $y_i$. We define the GKZ coordinates
	\begin{equation}
		z_1=\frac{a_1^{w_1/2}a_2^{w_2/2}a_5}{a_0^{d/2}},\qquad z_2=\frac{a_3a_4}{a_5^2}.
	\end{equation}
	The relations between $(\psi,\phi)$ and $(z_1,z_2)$ are $-d\psi=z_1^{-2/d}z_2$ and $-2\phi=z_2^{-1/2}$. We will denote the homogeneous polynomial $f_{\psi,\phi}$ in \eqref{eq:mirror} by $f_{z_1,z_2}$ to highlight the dependence on $z_1$ and $z_2$.
	
	\subsection{Variation of Hodge structure and Picard-Fuchs equations}
	Let $\mathsf{B}$ be a complex manifold and let $\pi:\mathcal{X}\rightarrow\mathsf{B}$ be a family of K3 surfaces. The vector bundle $\mathcal{H}^2_{\mathrm{dR}}(\mathcal{X})=R^2\pi_*\mathbb{C}\otimes\mathcal{O}_\mathsf{B}$ carries the Gauss-Manin connection $\nabla:\mathcal{H}^2_{\mathrm{dR}}(\mathcal{X})\rightarrow\mathcal{H}^2_{\mathrm{dR}}(\mathcal{X})\otimes_{\mathcal{O}_\mathsf{B}}\Omega^1_\mathsf{B}$ defined by the action on the locally constant subsheaf $R^2\pi_*\mathbb{C}$ by
	\begin{equation}
		\nabla(s\otimes f)=s\otimes df, 
	\end{equation}
	for $s\in R^2\pi_*\mathbb{C}, f\in\mathcal{O}_\mathsf{B}$, where $\mathcal{O}_{\mathsf{B}}$ denotes the $\mathbb{C}$-algebra of regular functions on $\mathsf{B}$ and by $\Omega^1_{\mathsf{B}}$ we denote the $\mathcal{O}_{\mathsf{B}}$-module of differential 1-forms on $\mathsf{B}$. The Hodge filtration $F^\bullet(X_b)=\{F^p(X_b)\}_{p=0,1,2}=\bigoplus_{a\geq p}H^{a,2-a}(X_b)$ for each fiber specifies the Hodge bundle $\mathcal{F}^\bullet$ of the family $\mathcal{X}$. The Hodge filtration $F^\bullet(X_b)$ varies holomorphically over the base $\mathsf{B}$ and $\nabla$ satisfies Griffiths' transversality
	\begin{equation}
		\nabla \mathcal{F}^p\subset \mathcal{F}^{p-1}\otimes_{\mathcal{O}_{\mathsf{B}}}\Omega^1_{\mathsf{B}}.
	\end{equation}
	We say that a family $\mathcal{X}$ of K3 surfaces is polarized by a lattice $M$ if each fiber of $\mathcal{X}$ is polarized by $M$. The image of the polarization $\iota: M\rightarrow \mathcal{H}^2_{\mathrm{dR}}(\mathcal{X})$ consists of constant sections of the Gauss-Manin connection. We will denote by $\nabla:\mathcal{H}^2_{\mathrm{dR}}(\mathcal{X})_{\iota}\rightarrow\mathcal{H}^2_{\mathrm{dR}}(\mathcal{X})_{\iota}\otimes_{\mathcal{O}_\mathsf{B}}\Omega^1_\mathsf{B}$ the induced connection on the quotient $\mathcal{H}^2_{\mathrm{dR}}(\mathcal{X})_{\iota}=\mathcal{H}^2_{\mathrm{dR}}(\mathcal{X})/\iota(M)$. Furthermore, we will denote by $\mathcal{F}^\bullet_{\iota}$ a filtration on $\mathcal{H}^2_{\mathrm{dR}}(\mathcal{X})_{\iota}$ induced by $F^\bullet$. We say that a basis $\omega=(\omega_1,\ldots,\omega_{m+2})$ of $H^2_{\rm dR}(X_b)_{\iota}$ is compatible with its Hodge filtration if $\omega_1\in F^2,~\omega_2,\ldots,\omega_{m+1}\in F^1\setminus F^2$ and $\omega_{m+2}\in F^0\setminus F^1$.
	
	For families of projective K3 surfaces in \eqref{eq:mirror} the variation of Hodge structure can be constructed with the Griffiths-Dwork method, as reviewed below. Define a holomorphic $3$-form on $\mathbb{P}(w_1,w_2,w_3,w_4)$ by
	\begin{equation}
		\Omega_{\mathbb{P}(w_1,w_2,w_3,w_4)}=\sum_{k=1}^4(-1)^kw_kx_k dx_1\wedge\ldots\wedge \widehat{dx_k}\wedge\ldots\wedge dx_4,
	\end{equation}
	where hat denotes the omission of the $k$-th factor. For hypersurfaces \eqref{eq:mirror}, $\Omega_{\mathbb{P}(w_1,w_2,w_3,w_4)}$ can be used to construct a basis of $H^3(\mathbb{P}(w_1,w_2,w_3,w_4)-X_b)$ by
	\begin{equation}
		\Xi_i=\frac{P_i\Omega_{\mathbb{P}(w_1,w_2,w_3,w_4)}}{f^k_{z_1,z_2}},
	\end{equation}
	where $P_i$ are homogeneous polynomials of degree $kd-(\sum w_j+1)$. $\Xi_i$ restrict to the hypersurface $X_b$ via the residue map $\mathrm{Res}: H^3(\mathbb{P}(w_1,w_2,w_3,w_4)-X_b)\rightarrow PH^2(X_b)$, where $PH^2(X_b)$ denotes the primitive cohomology of $X_b$. Let $\omega_j=\mathrm{Res}_{f_{z_1,z_2}(y)=0}(\Xi_j)\in PH^2(X_b)$. We define the Gauss-Manin connection by
	\begin{equation}
		\nabla_{\theta_i}\omega_j=\mathrm{Res}_{f_{z_1,z_2}(y)=0}(\theta_i\Xi_j),
	\end{equation}
	where $\theta_i=z_i\frac{\partial}{\partial z_i}$. It is straightforward to check that Griffiths transversality is satisfied and consequently a basis constructed from a fixed $(2,0)$ form $\omega_1$ by successive application of the Gauss-Manin connection is compatible with the Hodge filtration.
	The holomorphic $(2,0)$ form on $X_b$ can be chosen to be
	\begin{equation}
		\omega_1=\mathrm{Res}_{f_{z_1,z_2}(y)=0}\left(\sum_{k=1}^4(-1)^kw_ky_k\frac{dy_1\wedge\ldots\wedge \widehat{dy_k}\wedge\ldots\wedge dy_4}{f_{z_1,z_2}(y)}\right),
	\end{equation}
	We furthermore define 2-forms $\omega_i, i=2,3,4$ as
	\begin{equation}
		\omega_2=\nabla_{\theta_1}\omega_1,\quad \omega_3=\nabla_{\theta_2}\omega_1,\quad \mathrm{and}\quad
		\omega_4=\nabla_{\theta_1}\nabla_{\theta_1}\omega_1.
	\end{equation}
	For dimensional reasons $(\omega_1,\omega_2,\omega_3,\omega_4)$ indeed provide a basis for $H^2_{\mathrm{dR}}(X_b)_{\iota}$ and Griffiths transversality ensures its compatibility with the Hodge filtration.
	
	\begin{prop}\label{prop:GKZ}
		The Gauss-Manin connection in the basis $\omega=(\omega_1,\omega_2,\omega_3,\omega_4)$ is
		\begin{equation}
			\nabla_{\theta_i}\omega^{\rm tr}=G_i\omega^{\rm tr},\label{eq:GKZ}
		\end{equation}
		with
		\begin{equation}
			G_1=\begin{pmatrix}
				0&1&0&0\\
				0&0&0&1\\
				\frac{1}{2}\mu(1-\nu)z_1&\frac{1}{2}(\Delta_1-1)&0&\frac{1}{2}\Delta_1\\
				(G_1)_{41}&(G_1)_{42}&(G_1)_{43}&(G_1)_{44}
			\end{pmatrix},
		\end{equation}
		\begin{equation}
			G_2=\begin{pmatrix}
				0&0&1&0\\
				\frac{1}{2}\mu(1-\nu)z_1&\frac{1}{2}(\Delta_1-1)&0&\frac{1}{2}\Delta_1\\
				\frac{2\nu(1-\nu)z_1z_2}{\Delta_2}&\frac{(1-2\Delta_1)z_2}{\Delta_2}&\frac{2z_2}{\Delta_2}&\frac{(1-2\Delta_1)z_2}{\Delta_2}\\
				(G_2)_{41}&(G_2)_{42}&(G_2)_{43}&(G_2)_{44}
			\end{pmatrix},
		\end{equation}
		where we defined
		\begin{equation}
			\Delta_1=1-\mu\nu^2z_1,\qquad \Delta_2=1-4z_2,
		\end{equation}
		with $(\mu,\nu)$ given by $(3,3)$, $(4,4)$ and $(12,6)$ for K3 surfaces of elliptic type $E_6,~E_7$ and $E_8$ respectively. The entries $(G_i)_{4j}$ are collected in Appendix B. Parameters $\mu$ and $\nu$ can be computed from the toric data as
		\begin{equation}
			\mu=\frac{2d}{w_1w_2}\left(\frac{d}{w_1}\right)^{w_1/2-1}\left(\frac{d}{w_2}\right)^{w_2/2-1},\qquad\nu=\frac{d}{2}.
		\end{equation}
	\end{prop}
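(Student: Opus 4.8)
The statement is a computation carried out entirely within the residue/Griffiths--Dwork framework set up above, and the plan is to organize it around the flatness of the Gauss--Manin connection so that only a few classes have to be reduced by hand. First I would observe that most rows are forced by the definition of the basis. Since $\omega_2=\nabla_{\theta_1}\omega_1$, $\omega_3=\nabla_{\theta_2}\omega_1$ and $\omega_4=\nabla_{\theta_1}\nabla_{\theta_1}\omega_1$, one reads off immediately that the first row of $G_1$ is $(0,1,0,0)$, the second row of $G_1$ is $\nabla_{\theta_1}\omega_2=\nabla_{\theta_1}^2\omega_1=\omega_4$, i.e.\ $(0,0,0,1)$, and the first row of $G_2$ is $(0,0,1,0)$. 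The remaining entries require genuine pole-order reduction, and here flatness does real work: because $\nabla$ is the Gauss--Manin connection and $[\theta_1,\theta_2]=0$, we have $\nabla_{\theta_1}\nabla_{\theta_2}=\nabla_{\theta_2}\nabla_{\theta_1}$, so the third row of $G_1$ (expanding $\nabla_{\theta_1}\omega_3=\nabla_{\theta_1}\nabla_{\theta_2}\omega_1$) must coincide with the second row of $G_2$ (expanding $\nabla_{\theta_2}\omega_2=\nabla_{\theta_2}\nabla_{\theta_1}\omega_1$). This halves the work and gives a strong internal check, already visible in the matrices where those two rows agree.

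The core of the proof is therefore the evaluation of four classes: $\nabla_{\theta_1}\nabla_{\theta_2}\omega_1$, $\nabla_{\theta_2}^2\omega_1$, $\nabla_{\theta_1}^3\omega_1$ and $\nabla_{\theta_2}\nabla_{\theta_1}^2\omega_1$, the last two supplying the rows $(G_1)_{4j}$ and $(G_2)_{4j}$ of Appendix~B. For each I would differentiate the representative $\Xi=P\,\Omega_{\mathbb{P}(w_1,w_2,1,1)}/f_{z_1,z_2}^{\,k}$ using
\[
\theta_i\!\left(\frac{P\,\Omega_{\mathbb{P}(w_1,w_2,1,1)}}{f_{z_1,z_2}^{\,k}}\right)
=\frac{(\theta_i P)\,\Omega_{\mathbb{P}(w_1,w_2,1,1)}}{f_{z_1,z_2}^{\,k}}
-k\,\frac{P\,(\theta_i f_{z_1,z_2})\,\Omega_{\mathbb{P}(w_1,w_2,1,1)}}{f_{z_1,z_2}^{\,k+1}},
\]
where $\theta_i$ acts only on the coefficients of $f_{z_1,z_2}$, producing a single extra monomial in the $z_1$-direction and two monomials in the $z_2$-direction. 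The raised numerator is then rewritten modulo the Jacobian ideal $(\p_1 f_{z_1,z_2},\ldots,\p_4 f_{z_1,z_2})$, and the pole order is lowered by the Griffiths reduction
\[
\mathrm{Res}\,\frac{\big(\textstyle\sum_j A_j\,\p_j f_{z_1,z_2}\big)\,\Omega_{\mathbb{P}(w_1,w_2,1,1)}}{f_{z_1,z_2}^{\,k+1}}
=\frac{1}{k}\,\mathrm{Res}\,\frac{\big(\textstyle\sum_j \p_j A_j\big)\,\Omega_{\mathbb{P}(w_1,w_2,1,1)}}{f_{z_1,z_2}^{\,k}},
\]
performed on the $G$-invariant part (with the divergence weighted appropriately for $\mathbb{P}(w_1,w_2,1,1)$). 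Since $\dim H^2_{\mathrm{dR}}(X_b)_\iota=4$, iterating this reduction necessarily closes on $(\omega_1,\omega_2,\omega_3,\omega_4)$, and collecting coefficients yields the entries of $G_1,G_2$. I note that the two fourth rows are equivalent to the order-three Picard--Fuchs operators, which are precisely the GKZ hypergeometric operators attached to the charge vectors $l^{(1)},l^{(2)}$ of Table~\ref{tab:tab1}; this both explains the name of the proposition and furnishes an alternative derivation of $(G_i)_{4j}$ by reducing the GKZ operators to first order.

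The main obstacle is the explicit syzygy bookkeeping for the Fermat-type $f_{z_1,z_2}$ with the extra monomial $a_5\,y_3^{d/2}y_4^{d/2}$: one has to produce the decomposition of each raised numerator in the Jacobian ideal and track how the substitutions $-d\psi=z_1^{-2/d}z_2$ and $-2\phi=z_2^{-1/2}$ repackage the result into the rational functions $\Delta_1=1-\mu\nu^2 z_1$ and $\Delta_2=1-4z_2$. The factor $\Delta_2$ appearing in the denominators of the third row of $G_2$ reflects the single nontrivial relation $y_3^d y_4^d=(y_3^{d/2}y_4^{d/2})^2$ linking $a_3,a_4,a_5$, which must be inverted in the $z_2$-direction; matching the resulting leading coefficients fixes the normalizations $\mu=\tfrac{2d}{w_1w_2}(d/w_1)^{w_1/2-1}(d/w_2)^{w_2/2-1}$ and $\nu=d/2$ in terms of the weights. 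I would close by verifying the integrability relation $\theta_1 G_2-\theta_2 G_1=[G_1,G_2]$ as a global consistency check on all entries, including those relegated to Appendix~B.
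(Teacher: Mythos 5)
Your proposal is correct in outline, but it takes a genuinely different route from the paper. The paper's proof is a one-line appeal to the equivalence of the Picard--Fuchs system for hypersurfaces in (weighted projective) toric varieties with the GKZ hypergeometric system of Gel'fand--Kapranov--Zelevinskii: the operators are read off from the Mori-cone charge vectors $l^{(1)},l^{(2)}$ of Table \ref{tab:tab1} (this is where \eqref{eq:pff} comes from), and $G_1,G_2$ are then just that rank-four system rewritten in first-order form with respect to the basis $\omega$. You instead run the Griffiths--Dwork reduction from first principles --- differentiation under the residue, reduction modulo the Jacobian ideal, pole-order lowering --- with the flatness identity $\nabla_{\theta_1}\nabla_{\theta_2}=\nabla_{\theta_2}\nabla_{\theta_1}$ used to cut the work to four classes and the integrability relation $\theta_1 G_2-\theta_2 G_1=[G_1,G_2]$ as a closing check; both are sound, and the latter is a genuinely useful verification of the Appendix B entries that the paper never makes explicit. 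What each approach buys: yours is self-contained in the framework the paper has already set up and produces the matrices directly in the stated basis, without invoking the GKZ equivalence (which, for quotients of weighted hypersurfaces, tacitly requires that the GKZ system, after discarding solutions not coming from invariant cohomology, agree with the PF system); the paper's is far shorter and makes the combinatorial origin of $\mu$, $\nu$, $\Delta_1$, $\Delta_2$ transparent. One caveat on your closing aside: the GKZ operator attached to $l^{(1)}$ has order $w_1/2+w_2/2+1$, i.e.\ $3,4,6$ for the $E_6,E_7,E_8$ cases, not uniformly three, and only after factorization does it reduce to the second-order operator in \eqref{eq:pff}; so the fourth-row relations are consequences of the GKZ/PF ideal rather than ``precisely'' the GKZ operators. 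This imprecision does not affect your main argument, which nowhere relies on that identification.
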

	
	\begin{proof}
		Picard-Fuchs system for hypersurfaces in weighted projective varieties is equivalent to the GKZ hypergeometric system \cite{Gel:1989} and can thus be determined from the generators of the Mori cone $l^{(i)}$ in Table \ref{tab:tab1}.
	\end{proof}
	
	\noindent Let $W_b$ be the Poincar\'e dual of $\iota(M)$ in $H^2_{\mathrm{dR}}(X_b)$. We define
	\begin{equation}
		H_2(X_b,\mathbb{Z})_\iota=H_2(X_b,\mathbb{Z})/W_b.
	\end{equation}
	Denote the basis of $H_2(X_b,\mathbb{Z})_\iota$ by $\gamma^j_b,~j=1,\ldots,m+2,$ and define the period matrix $\mathbf{\Pi}$ by integrating the basis $\omega$ of $H_{\mathrm{dR}}^2(X_b)_\iota$ over the integral cycles $\gamma^j_b\in H_2(X_b,\mathbb{Z})_{\iota}$
	\begin{equation}
		\mathbf{\Pi}_{ij}=\int_{\gamma^j_b }\omega_i,\quad \gamma^j_b\in H_2(X_b,\mathbb{Z})_\iota,\quad i,j=1,\ldots,m+2.
	\end{equation}
	The first row of $\mathbf{\Pi}$ corresponds to the periods $(X^0,X^1,X^2,X^3)$ of $\omega_1$. They satisfy the Picard-Fuchs differential equations
	\begin{equation}
		\begin{split}
			&\left(\theta_1(\theta_1-2\theta_2)-\mu z_1(\nu\theta_1+\nu-1)(\nu\theta_1+1)\right)\cdot X^j=0,\\
			&\left(\theta_2^2-z_2(\theta_1-2\theta_2)(\theta_1-2\theta_2-1)\right)\cdot X^j=0, \quad j=0,1,2,3,
		\end{split}\label{eq:pff}
	\end{equation}
	where $(\mu,\nu)$ are $(3,3)$, $(4,4)$ and $(12,6)$ as in Proposition \ref{prop:GKZ}. The system \eqref{eq:pff} admits a holomorphic solution
	\begin{equation}
		X^0=\sum_{n\geq2m\geq0}\frac{\left(\frac{d}{2}n\right)!}{\left(\frac{w_1}{2}n\right)!\left(\frac{w_2}{2}n\right)!(m!)^2(n-2m)!}z_1^nz_2^m,\label{eq:holp}
	\end{equation}
	with leading term 1. There are unique solutions $X^a,~a=1,2$ of \eqref{eq:pff} of the form
	\begin{equation}
		X^a=(2\pi i)^{-1}X^0\log(z_a)+S^a,\qquad a=1,2,
	\end{equation}
	where $S^a$ is a convergent power series in $z_1$ and $z_2$, with $S^a\rightarrow 0$ as $|z_i|\rightarrow 0$. The series $S^a$ are fixed uniquely as a solution to \eqref{eq:pff}.
	
	\begin{dfn}
		The Griffiths-Yukawa couplings $\mathsf{Y}_{ij}$ are
		\begin{equation}
			\mathsf{Y}_{ij}=-\int_X \omega_1\wedge \nabla_{\theta_i} \nabla_{\theta_j} \omega_1,\quad i,j=1,2.
		\end{equation}
	\end{dfn}
	\begin{prop}
		The Griffiths-Yukawa couplings for the mirrors of elliptically fibered K3 surfaces are given by
		\begin{equation}\label{{eq:yg}}
			\begin{split}
				&\mathsf{Y}_{11}=\frac{2c}{\Delta_1^2+(\Delta_2-1)(\Delta_1-1)^2},\\
				&\mathsf{Y}_{12}=\mathsf{Y}_{21}=\frac{c\Delta_1}{\Delta_1^2+(\Delta_2-1)(\Delta_1-1)^2},\\
				&\mathsf{Y}_{22}=\frac{2c(2\Delta_1-1)z_2}{\Delta_1^2+(\Delta_2-1)(\Delta_1-1)^2}.
			\end{split}
		\end{equation}
	\end{prop}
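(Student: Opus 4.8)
The plan is to reduce the three couplings to a single ``seed'' period pairing by Hodge-theoretic orthogonality, and then to pin down that seed by integrating a first-order system coming from flatness of the Gauss-Manin connection. Write $\langle\alpha,\beta\rangle=\int_X\alpha\wedge\beta$ for the intersection pairing on $H^2_{\mathrm{dR}}(X_b)_\iota$; it is $\nabla$-flat, i.e. $\theta_a\langle\alpha,\beta\rangle=\langle\nabla_{\theta_a}\alpha,\beta\rangle+\langle\alpha,\nabla_{\theta_a}\beta\rangle$. Since the basis $\omega=(\omega_1,\omega_2,\omega_3,\omega_4)$ is compatible with the Hodge filtration, with $\omega_1\in F^2$, $\omega_2,\omega_3\in F^1$, $\omega_4\in F^0$, and since a weight-two polarization satisfies $\langle F^p,F^q\rangle=0$ for $p+q>2$, I first record $\langle\omega_1,\omega_1\rangle=\langle\omega_1,\omega_2\rangle=\langle\omega_1,\omega_3\rangle=0$, so that $Y:=\langle\omega_1,\omega_4\rangle$ is the only possibly nonzero pairing of $\omega_1$ against the basis. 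By definition $\mathsf{Y}_{11}=-Y$, hence everything reduces to computing the $\omega_4$-component of each $\nabla_{\theta_i}\nabla_{\theta_j}\omega_1$ and to determining the scalar function $Y$.

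The first task is immediate from the connection matrices $G_1,G_2$ of Proposition~\ref{prop:GKZ} (equivalently from the Picard-Fuchs relations~\eqref{eq:pff}). Pairing $\omega_1$ with $\nabla_{\theta_1}\omega_3=\tfrac12\mu(1-\nu)z_1\,\omega_1+\tfrac12(\Delta_1-1)\,\omega_2+\tfrac12\Delta_1\,\omega_4$ and using the orthogonality relations annihilates every term but the last, giving $\mathsf{Y}_{12}=\tfrac12\Delta_1\,\mathsf{Y}_{11}$; flatness together with $[\theta_1,\theta_2]=0$ forces $\mathsf{Y}_{12}=\mathsf{Y}_{21}$ automatically. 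Likewise the $\omega_4$-coefficient $(G_2)_{34}$ of $\nabla_{\theta_2}\omega_3$ (or, equivalently, reducing $\nabla_{\theta_2}^2\omega_1$ through the second operator in~\eqref{eq:pff}) expresses $\mathsf{Y}_{22}$ as an explicit rational multiple of $\mathsf{Y}_{11}$. Thus all three couplings are proportional to the single seed $\mathsf{Y}_{11}=-Y$ with coefficients already determined.

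It remains to determine $Y$ as a function of $(z_1,z_2)$, which I would do by differentiating the orthogonality relations. Applying $\theta_1$ to $\langle\omega_1,\omega_2\rangle=0$ gives $\langle\omega_2,\omega_2\rangle=-Y$, and applying $\theta_1$ again gives $\langle\omega_2,\omega_4\rangle=-\tfrac12\theta_1 Y$. Substituting into $\theta_1 Y=\langle\omega_2,\omega_4\rangle+(G_1)_{44}Y$ yields the closed equation $\theta_1\log Y=\tfrac23(G_1)_{44}$, and an entirely parallel manipulation in the $z_2$-direction produces $\theta_2\log Y$ in terms of the entries $(G_2)_{4j}$. Inserting the explicit fourth rows from Appendix~B and integrating this overdetermined system gives $Y=-2c\big(\Delta_1^2+(\Delta_2-1)(\Delta_1-1)^2\big)^{-1}$ with $c$ an integration constant; combining with the proportionalities above reproduces the three stated formulas.

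The main obstacle is this last integration: one must check that the two logarithmic derivatives $\theta_1\log Y$ and $\theta_2\log Y$, assembled from the Appendix-B entries, integrate to the single rational function whose denominator is $\Delta_1^2+(\Delta_2-1)(\Delta_1-1)^2=(1-\mu\nu^2 z_1)^2-4\mu^2\nu^4 z_1^2 z_2$. Recognizing this denominator as the discriminant of the family---so that the couplings have poles exactly along the locus where $\mathsf{T}$ ceases to be smooth---is the computational crux, while the integrability condition $\theta_1\theta_2\log Y=\theta_2\theta_1\log Y$ is guaranteed by flatness and only serves as a consistency check.
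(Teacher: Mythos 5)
Correct, and essentially the paper's own proof: the paper likewise differentiates the couplings and integrates by parts to obtain $\theta_k\mathsf{Y}_{ij}=-\tfrac{2}{3}\int_X\omega_1\wedge\nabla_{\theta_i}\nabla_{\theta_j}\nabla_{\theta_k}\omega_1$ --- your relation $\theta_1\log Y=\tfrac{2}{3}(G_1)_{44}$ is exactly this identity for $i=j=k=1$ after reducing $\nabla_{\theta_1}^3\omega_1$ through the Gauss-Manin system --- and it then fixes $\mathsf{Y}_{12}$ and $\mathsf{Y}_{22}$ via the algebraic relations $\Delta_1\mathsf{Y}_{11}-2\mathsf{Y}_{12}=0$ and $\Delta_2\mathsf{Y}_{22}+4z_2\mathsf{Y}_{12}-z_2\mathsf{Y}_{11}=0$, which are precisely your $\omega_4$-coefficient extractions from the rows of $G_1$ and $G_2$. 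One caveat: carried out literally, both your extraction of $(G_2)_{34}$ and the paper's second relation give $\mathsf{Y}_{22}=\tfrac{(1-2\Delta_1)z_2}{\Delta_2}\mathsf{Y}_{11}$, which is $-1/\Delta_2$ times the formula for $\mathsf{Y}_{22}$ displayed in the proposition; this mismatch is internal to the paper (its stated $\mathsf{Y}_{22}$ is inconsistent with its own proof and connection matrices), so your method is sound, but your closing claim of ``reproducing the three stated formulas'' actually holds only for $\mathsf{Y}_{11}$ and $\mathsf{Y}_{12}$.
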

	\begin{proof}
		We compute
		\begin{equation}
			\theta_k \mathsf{Y}_{ij}=-\int_X\nabla_{\theta_k}\omega_1\wedge\nabla_{\theta_i}\nabla_{\theta_j}\omega_1-\int_X\omega_1\wedge\nabla_{\theta_i}\nabla_{\theta_j}\nabla_{\theta_k}\omega_1.
		\end{equation}
		Integrating by parts the first term, we find
		\begin{equation}\label{eq:eq23}
			\theta_k\mathsf{Y}_{ij}=-\frac{2}{3}\int_X\omega\wedge\nabla_{\theta_i}\nabla_{\theta_j}\nabla_{\theta_k}\omega_1.
		\end{equation}
		From the Gauss-Manin system we can express the action of $\nabla_{\theta_i}\nabla_{\theta_j}\nabla_{\theta_k}$ in terms of lower order operators. By Griffiths transversality only operators of second order will contribute and $\mathsf{Y}_{ij}$ are the solutions of the resulting differential equations. Moreover, we have the following relations between Griffiths-Yukawa couplings
		\begin{equation}
			\Delta_1 \mathsf{Y}_{11}-2\mathsf{Y}_{12}=0\qquad\mathrm{and}\qquad\Delta_2\mathsf{Y}_{22}+4z_2\mathsf{Y}_{12}-z_2\mathsf{Y}_{11}=0,
		\end{equation}
		which fix $\mathsf{Y}_{12}=\mathsf{Y}_{21}$ and $\mathsf{Y}_{22}$ in terms of $\mathsf{Y}_{11}$.
	\end{proof}
	\noindent The intersection pairing
	\begin{equation}
		Q:H^2_{\mathrm{dR}}(X)\times H^2_{\mathrm{dR}}(X)\rightarrow \mathbb{C}, \quad Q(\omega_i,\omega_j)=\int_X\omega_i\wedge\omega_j,
	\end{equation}
	in the basis $\omega$ will be denoted by $Q_\omega$. It is given by
	\begin{equation}
		Q_\omega=\begin{pmatrix}
			0&0&0&-\mathsf{Y}_{11}\\
			0&\mathsf{Y}_{11}&\mathsf{Y}_{12}&\frac{1}{2}\theta_1\mathsf{Y}_{11}\\
			0&\mathsf{Y}_{21}&\mathsf{Y}_{22}&-\frac{1}{2}\theta_2\mathsf{Y}_{11}+\theta_1\mathsf{Y}_{12}\\
			-\mathsf{Y}_{11}&\frac{1}{2}\theta_1\mathsf{Y}_{11}&-\frac{1}{2}\theta_2\mathsf{Y}_{11}+\theta_1\mathsf{Y}_{12}&\mathsf{Y}_{44}
		\end{pmatrix},
	\end{equation}
	with
	\begin{equation}
		\begin{split}
			\mathsf{Y}_{44}&=-\frac{1}{\Delta_1^2+(\Delta_2-1)(\Delta_1-1)^2}\left(-4\theta_1^2\mathsf{Y}_{11}+\frac{1}{2}(\Delta_1-1)(1+\Delta_2(1+\Delta_1))\theta_1\mathsf{Y}_{11}\right.\\
			&\left.+\frac{1}{2}\left((\Delta_1-1)(2\Delta_2(1-\Delta_1)-1)+4\mu(\nu-1)z_1(4(1-\Delta_2)+3\Delta_2)\right)\mathsf{Y}_{11}\right).
		\end{split}
	\end{equation}
	Note that the logarithmic derivatives of $\mathsf{Y}_{ij}$ can be expressed in terms of multiplication factors
	\begin{equation}
		\theta_1\mathsf{Y}_{11}=\frac{(1-\Delta_1)(1+(\Delta_1-1)\Delta_2)}{\Delta_1^2+(\Delta_2-1)(\Delta_1-1)^2}\mathsf{Y}_{11},\qquad \theta_2\mathsf{Y}_{11}=\frac{(1-\Delta_1)^2(\Delta_2-1)}{\Delta_1^2+(\Delta_2-1)(\Delta_1-1)^2}\mathsf{Y}_{11},
	\end{equation}
	hence all the entries in $Q_\omega$ can be expressed in terms of $\mathsf{Y}_{11}$ and algebraic prefactors.

	\section{Algebraic variation of Hodge structure and differential rings}\label{sec:vhs}
	\subsection{Moduli space of enhanced K3 surfaces}
	By moduli spaces $\mathsf{T}$ of lattice polarized K3 surfaces enhanced with differential forms we refer to the moduli spaces of pairs $(X,\{\omega_i\}_{i=1,\ldots,m+2})$ where $X$ is a K3 surface polarized by a lattice $M$ of rank $\mathrm{rk}(M)=20-m$ with signature $(1,19-m)$ and $\{\omega_i\}_{1=1,\ldots,m+2}$ is a basis of $H_{\mathrm{dR}}^2(X)/\iota(M)$, where $\iota:M\rightarrow H_{\mathrm{dR}}^2(X)$ denotes the polarization map. A basis $\{\alpha_i\}_{i=1,\ldots,m+2}$ of $H_{\mathrm{dR}}^2(X)/\iota(M)$ can be fixed, such that the intersection pairing in this basis is given by the pairing
	\begin{equation}
		\Phi=\begin{pmatrix}
			0&0&-1\\
			0&C_{ab}&0\\
			-1&0&0
		\end{pmatrix},
	\end{equation}
	where $C_{ab}$ denote the intersection numbers of the elliptic K3 surface. Mirror families introduced in the previous section are two-parameter families of hypersurfaces in weighted projective spaces \eqref{eq:mirror}. They are polarized by the pull-back of the lattice of toric divisors to the hypersurface. The rank of $\mathrm{Pic}_{cor}(X^\circ)$ is $18$, $m=2$ and the moduli space $\mathsf{T}$ is 6-dimensional. Away from the discriminant locus $\mathsf{T}$ is a locally ringed space with the local ring $\mathcal{O}_\mathsf{T}$.  We will show that there is an isomorphism
	\begin{equation}
		\mathcal{O}_{\mathsf{T}}\cong \widetilde{\mathsf{M}}(\Gamma_0(N)\times\Gamma_0(N)),\label{eq:iso2}
	\end{equation}
	between the local ring $\mathcal{O}_{\mathsf{T}}$ and the graded ring of quasi-modular forms of the modular subgroup $\Gamma_0(N)$ in two variables. The level $N$ of the congruence subgroup is determined by the type of elliptic fiber of the elliptic K3, as explained in the next subsection. For explicit construction of the coordinates on $\mathsf{T}$ consider the filtration preserving transformation $\omega\mapsto \alpha=\mathsf{S}\omega,~G_i\mapsto G_a= \sum_i\frac{1}{z_i}\frac{\partial z_i}{\partial t_a}\mathsf{S}G_i\mathsf{S}^{-1}+\partial_a \mathsf{S}\cdot\mathsf{S}^{-1},~i,a=1,2$, where $\mathsf{S}$ is of the form
	\begin{equation}
		\mathsf{S}=\begin{pmatrix}
			\mathsf{s}_0&0&0\\
			\mathsf{s}_a&\mathsf{s}_{a,i}&0\\
			\mathsf{s}_{3,0}&\mathsf{s}_{3,i}&\mathsf{s}_{3,3}
		\end{pmatrix},
	\end{equation}
	and $\partial_a=\frac{\partial}{\partial t_a}$ denotes the differentiation with respect to coordinates $t_a$. The moduli space $\mathsf{T}$ of K3 surfaces enhanced with differential forms consists of the moduli space $\mathsf{B}$, together with the independent parameters of $\mathsf{S}$. The condition on the pairing
	\begin{equation}
		\mathsf{S}Q_\omega \mathsf{S}^{\mathrm{tr}}=\Phi,
	\end{equation}
	reads explicitly:
	\begin{equation}
		\begin{split}
			\mathsf{s}_{3,3}&=\frac{1}{\mathsf{s}_0\mathsf{Y}_{1,1}},\\
			\mathsf{C}_{ab}^{alg}&=\mathsf{s}_{a,i}\mathsf{s}_{b,j}\mathsf{Y}_{ij},\quad i,j=1,2,\\
			s_{3,i}&=\frac{1}{\mathsf{s}_0}\mathsf{s}^{-1}_{a,j}\mathsf{Y}^{-1}_{ji}\mathsf{s}_{a}+\frac{1}{\mathsf{s}_0}\frac{\mathsf{Y}^{-1}_{i1}\mathsf{Y}_{24}+\mathsf{Y}^{-1}_{i2}\mathsf{Y}_{34}}{\mathsf{Y}_{11}},\\
			\mathsf{s}_{3,0}&=\frac{1}{2\mathsf{s}_0}(\mathsf{C}^{alg})^{-1}_{ab}\mathsf{s}_{a,0}\mathsf{s}_{b,0}+\frac{\mathsf{Y}_{22}(\mathsf{Y}_{24}^2+\mathsf{Y}_{11}\mathsf{Y}_{44})-\mathsf{Y}_{12}^2\mathsf{Y}_{44}-2\mathsf{Y}_{12}\mathsf{Y}_{24}\mathsf{Y}_{34}}{2\mathsf{s}_0\mathsf{Y}_{11}^2(\mathsf{Y}_{11}\mathsf{Y}_{22}-\mathsf{Y}_{12}^2)}.
		\end{split}
	\end{equation}
	For elliptic K3 surfaces we find 4 independent parameters. As coordinates on $\mathsf{T}$ we choose $\mathsf{s}_0, \mathsf{s}_1, \mathsf{s}_2$ and $\mathsf{s}_{1,1}$.
	
	\subsection{Algebraic variation of Hodge structure for projective elliptic K3 surfaces}

	Let $(\mathcal{X},\alpha)\rightarrow \mathsf{T}$ be a family of lattice polarized projective K3 surfaces with a fixed choice of basis $\alpha$ of $\mathcal{H}^2_{\mathrm{dR}}(\mathcal{X})_\iota$, such that the intersection pairing in the basis $\alpha$ is $\Phi$. Let furthermore $\mathsf{R}$ denote the function ring of $\mathsf{T}$. The relative algebraic de Rham cohomology $H^2_{\mathrm{dR}}(\mathcal{X}/\mathsf{T})$ (see \cite{Grothendieck:1966}) carries the Gauss-Manin connection $\nabla: H^2_{\mathrm{dR}}(\mathcal{X})\rightarrow H^2_{\mathrm{dR}}(\mathcal{X}/\mathsf{T})\otimes_{\mathsf{R}}\Omega^1_{\mathsf{T}}$, where $\Omega^1_{\mathsf{T}}$ is an $\mathsf{R}$-module of differential forms in $\mathsf{R}$ \cite{Katz:1968}. As before, the Gauss-Manin connection restricts to the quotient $\mathcal{H}^2_{\mathrm{dR}}(\mathcal{X})_\iota$. Let $\mathrm{Vec}(\mathsf{T})$ be the Lie algebra of vector fields on $\mathsf{T}$. The algebraic Gauss-Manin connection $\nabla$ acts on $\alpha$ as
	\begin{equation}
		\nabla_{\mathsf{E}_i}\alpha=\mathsf{A}_{\mathsf{E}_i}\alpha,\qquad \mathsf{E}_i\in \mathrm{Vec}(\mathsf{T}),
	\end{equation}
	where $\mathsf{A}_{\mathsf{E}_i}$ are $(m+2)\times (m+2)$ matrices with entries in $\mathcal{O}_{\mathsf{T}}$.
	
	\begin{thm}\label{thm:mvf}
		There are unique vector fields $\mathsf{R}_a\in\mathrm{Vec}(\mathsf{T})$ and unique $\mathsf{C}_{ab}^{alg}\in\mathcal{O}_{\mathsf{T}}, a,b=1,2$ symmetric in $a,b$ such that
		\begin{equation}
			\mathsf{A}_{\mathsf{R}_a}=\begin{pmatrix}
				0&\delta^b_a&0\\
				0&0&\mathsf{C}_{ac}^{alg}\\
				0&0&0
			\end{pmatrix}.\label{eq:modvf}
		\end{equation}
		We call them modular vector fields. Furthermore
		\begin{equation}
			\mathsf{R}_{a_1}\mathsf{C}_{a_2a_3}^{alg}=0.
		\end{equation}
	\end{thm}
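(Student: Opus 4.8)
I organize the argument around two facts that every Gauss–Manin matrix $\mathsf{A}_{\mathsf{E}}$ must obey, and reduce the statement to linear algebra in a fixed $6$-dimensional Lie algebra. The plan is first to use that the connection is $\mathcal{O}_{\mathsf{T}}$-linear in the vector field, so it suffices to understand the six matrices $\mathsf{A}_{\theta_1},\mathsf{A}_{\theta_2}$ and $\mathsf{A}_{\partial_{\mathsf{s}_0}},\mathsf{A}_{\partial_{\mathsf{s}_1}},\mathsf{A}_{\partial_{\mathsf{s}_2}},\mathsf{A}_{\partial_{\mathsf{s}_{1,1}}}$ attached to the six coordinate fields on $\mathsf{T}$, and to write a general field as $\mathsf{A}_{\mathsf{E}}=\sum_i e_i\mathsf{A}_{\theta_i}+\sum_\bullet f_\bullet\mathsf{A}_{\partial_{\mathsf{s}_\bullet}}$. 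Because $\alpha=\mathsf{S}\omega$ is chosen so that $Q(\alpha_i,\alpha_j)=\Phi_{ij}$ is the \emph{constant} matrix $\Phi$, flatness of the intersection pairing forces $\mathsf{A}_{\mathsf{E}}\Phi+\Phi\mathsf{A}_{\mathsf{E}}^{\mathrm{tr}}=0$, so every $\mathsf{A}_{\mathsf{E}}$ lies in the $6$-dimensional $\mathfrak{so}(\Phi)$. Grading $\mathfrak{so}(\Phi)$ by the Hodge filtration (graded dimensions $1,2,1$), I would show it splits as $\mathfrak{n}^+\oplus\mathfrak{l}\oplus\mathfrak{n}^-$ with $\dim\mathfrak{n}^{\pm}=\dim\mathfrak{l}=2$ and no degree-$(\pm2)$ part; in particular the vanishing $\mathsf{A}_{14}=\mathsf{A}_{41}=0$ demanded by Griffiths transversality is automatic in $\mathfrak{so}(\Phi)$.

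The second step is to locate the six coordinate matrices within this grading. Since $\mathsf{S}$ is block lower triangular (hence filtration preserving), each fiber matrix $\mathsf{A}_{\partial_{\mathsf{s}_\bullet}}=(\partial_{\mathsf{s}_\bullet}\mathsf{S})\mathsf{S}^{-1}$ is block lower triangular and lies in $\mathfrak{l}\oplus\mathfrak{n}^-$; as the four $\mathsf{s}$-coordinates are independent parameters of $\mathsf{S}$, these four matrices are independent and therefore span the $4$-dimensional $\mathfrak{l}\oplus\mathfrak{n}^-$ (this is what will later be identified with $\mathrm{Lie}(\mathsf{G})$). By contrast the base matrices $\mathsf{A}_{\theta_i}=(\theta_i\mathsf{S})\mathsf{S}^{-1}+\mathsf{S}G_i\mathsf{S}^{-1}$ acquire a genuine degree-$(+1)$ component from $G_i$, whose $\mathfrak{n}^+$-projection is the Kodaira–Spencer map and is governed by the Griffiths–Yukawa couplings $\mathsf{Y}_{ij}$. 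The crucial point is that this projection is nondegenerate precisely away from the discriminant, where the denominator $\Delta_1^2+(\Delta_2-1)(\Delta_1-1)^2$ and $\det\mathsf{Y}$ are nonzero. Hence the six matrices are $\mathcal{O}_{\mathsf{T}}$-linearly independent there and the tensorial map $\mathsf{E}\mapsto\mathsf{A}_{\mathsf{E}}$ is a pointwise isomorphism $\mathrm{Vec}(\mathsf{T})\xrightarrow{\ \sim\ }\mathfrak{so}(\Phi)$.

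Given the isomorphism, existence and uniqueness of $\mathsf{R}_a$ follow by solving degree by degree. The prescribed matrix in \eqref{eq:modvf} lies entirely in $\mathfrak{n}^+$, so I first match its degree-$(+1)$ part: only $\theta_1,\theta_2$ contribute there, and requiring the $\mathrm{Hom}(F^2,F^1/F^2)$-component to equal $\delta^b_a$ is a $2\times2$ linear system in $(e_1,e_2)$ with the invertible Yukawa matrix as coefficient, fixing the base components uniquely. Feeding these back, the degree-$0$ and degree-$(-1)$ parts of $\mathsf{A}_{\mathsf{R}_a}$ are then required to vanish; since the fiber matrices span $\mathfrak{l}\oplus\mathfrak{n}^-$ bijectively, this determines the four coefficients $f_\bullet$ uniquely, producing a unique vector field $\mathsf{R}_a$ with regular coefficients away from the discriminant. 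The symmetric functions $\mathsf{C}^{alg}_{ab}$ are read off as the remaining $\mathrm{Hom}(F^1/F^2,F^0/F^1)$-entries of the degree-$(+1)$ part.

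Finally, for $\mathsf{R}_{a_1}\mathsf{C}^{alg}_{a_2a_3}=0$ I would use that membership in $\mathfrak{so}(\Phi)$ rigidly ties these entries to the chosen one: inside $\mathfrak{n}^+$ the map $F^1/F^2\to F^0/F^1$ is determined from $F^2\to F^1/F^2$ by the single relation $\mathsf{A}_{\mathsf{R}_a}\Phi+\Phi\mathsf{A}_{\mathsf{R}_a}^{\mathrm{tr}}=0$, which reads $\sum_c C_{bc}\mathsf{C}^{alg}_{ac}=\delta^b_a$ with $C_{bc}$ the constant middle block of $\Phi$. Being algebraically determined by the constant $\Phi$, each $\mathsf{C}^{alg}_{ab}$ is therefore constant on $\mathsf{T}$ and is annihilated by every vector field, in particular by each $\mathsf{R}_{a_1}$. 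I expect the main obstacle to be the second step, namely establishing the pointwise isomorphism by verifying that the $\mathfrak{n}^+$-projections of $\mathsf{A}_{\theta_1},\mathsf{A}_{\theta_2}$ are independent away from the discriminant, since this is where the explicit Gauss–Manin data of Proposition \ref{prop:GKZ} and the nonvanishing of the Yukawa couplings genuinely enter rather than purely formal Hodge-theoretic considerations.
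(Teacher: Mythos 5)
Your proposal is essentially correct, but it takes a genuinely different route from the paper. The paper's proof is constructive: it observes that Theorem \ref{thm:mvf} amounts to solving the gauge-fixing system \eqref{eq:flat}, exhibits the explicit solution $\mathsf{s}_0=(X^0)^{-1}$, $t_a=X^a/X^0$ in terms of periods (the Proposition following the theorem, whose proof is declared computational with a reference to \cite{Alim:2014}, and which also fixes $\mathsf{s}_{a,i}$ and $\mathsf{s}_a$ via \eqref{eq:re}), and then reads off $\mathsf{C}^{alg}_{11}=C_{HH}$, $\mathsf{C}^{alg}_{12}=C_{HL}$, $\mathsf{C}^{alg}_{22}=C_{LL}=0$ in the subsequent Corollary. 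In particular, in the paper the modular vector fields are realized concretely as $\partial/\partial t_a$ in the flat (mirror-map) coordinates, which is exactly what feeds the identification of $\mathcal{O}_{\mathsf{T}}$ with quasi-modular forms. Your argument is instead structural: constancy of $\Phi$ plus flatness of $Q$ under $\nabla$ forces $\mathsf{A}_{\mathsf{E}}\Phi+\Phi\mathsf{A}_{\mathsf{E}}^{\mathrm{tr}}=0$, so all connection matrices live in the $6$-dimensional $\mathfrak{so}(\Phi)$; the grading has no degree-$(\pm2)$ part (correct: the $(1,1)$-entry of $\mathsf{A}\Phi+\Phi\mathsf{A}^{\mathrm{tr}}$ equals $-2\mathsf{A}_{14}$); the four fiber matrices $(\partial_{\mathsf{s}_\bullet}\mathsf{S})\mathsf{S}^{-1}$ span the block-lower-triangular part, the two base matrices have independent degree-$(+1)$ projections away from the discriminant, and therefore $\mathsf{E}\mapsto\mathsf{A}_{\mathsf{E}}$ is a pointwise bijection onto $\mathfrak{so}(\Phi)$, yielding existence and uniqueness at once. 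What your approach buys is a clean uniqueness argument (the paper treats uniqueness only implicitly) and independence from the explicit period computation; what it loses is the explicit description of $\mathsf{R}_a$ and of the special coordinates, which the paper needs anyway for the isomorphism \eqref{eq:iso3} and which makes its existence proof a finite verification.

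One slip should be corrected. The constraint that \eqref{eq:modvf} lie in $\mathfrak{so}(\Phi)$ is not $\sum_c C_{bc}\mathsf{C}^{alg}_{ac}=\delta^b_a$: computing the $(1,m)$-entries of $\mathsf{A}_{\mathsf{R}_a}\Phi+\Phi\mathsf{A}_{\mathsf{R}_a}^{\mathrm{tr}}$ for $m$ a middle index gives $C_{ab}-\mathsf{C}^{alg}_{ab}=0$, i.e.\ $\mathsf{C}^{alg}_{ab}=C_{ab}$, the middle block of $\Phi$ itself rather than its inverse. Your conclusion is unaffected --- either way the $\mathsf{C}^{alg}_{ab}$ are constants, hence annihilated by every vector field, so $\mathsf{R}_{a_1}\mathsf{C}^{alg}_{a_2a_3}=0$ --- but the corrected identification is the one stated in the paper's Corollary and subsequently used in the proof of Theorem \ref{thm:mainthm}, where the generators $\mathfrak{g}_i$ and $\mathsf{A}_{\mathsf{R}_a}$ carry the entries $\mathsf{C}^{alg}_{ab}=C_{ab}$.
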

	Theorem \ref{thm:mvf} amounts to finding $\mathsf{S}$ and $t_a$ as above such that
	\begin{equation}
		\mathsf{A}_{\frac{\partial}{\partial t_a}}=\sum_i\frac{1}{z_i}\frac{\partial z_i}{\partial t_a}\mathsf{S}G_i \mathsf{S}^{-1}+\partial_a \mathsf{S}\cdot\mathsf{S}^{-1}.\label{eq:flat}
	\end{equation}
	\begin{prop}
		The system \eqref{eq:flat} is solved by
		\begin{equation}
			\mathsf{s}_0=\left(X^0\right)^{-1},\qquad t_a=\frac{X^a}{X^0},\label{eq:isoms}
		\end{equation}
		where $X^0$ denotes the fundamental period and $X^a$ denote the the periods with a logarithmic pole at $z_a=0$. Furthermore, the other independent parameters in $\mathsf{S}$ satisfy
		\begin{equation}\label{eq:re}
			\mathsf{s}_{a,i}=\frac{1}{z_i}\frac{\partial z_i}{\partial t_a}\mathsf{s}_0,\quad
			\mathsf{s}_a=\sum_{i=1,2}\mathsf{s}_{a,i}\theta_i \log\mathsf{s}_0=\partial_{t_a}\log \mathsf{s}_0.
		\end{equation}
	\end{prop}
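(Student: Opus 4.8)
The plan is to recast \eqref{eq:flat} as a statement about the transformed period matrix and then read the connection matrix off it. First I would record that, since $\omega_2=\nabla_{\theta_1}\omega_1$, $\omega_3=\nabla_{\theta_2}\omega_1$, $\omega_4=\nabla_{\theta_1}\nabla_{\theta_1}\omega_1$ and the cycles $\gamma^j_b$ are flat, the period matrix has the explicit shape $\mathbf{\Pi}_{1j}=X^{j-1}$, $\mathbf{\Pi}_{2j}=\theta_1 X^{j-1}$, $\mathbf{\Pi}_{3j}=\theta_2 X^{j-1}$, $\mathbf{\Pi}_{4j}=\theta_1^2 X^{j-1}$, and that Proposition \ref{prop:GKZ} gives $\theta_i\mathbf{\Pi}=G_i\mathbf{\Pi}$. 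Setting $\tilde{\mathbf{\Pi}}=\mathsf{S}\mathbf{\Pi}$ (with rows $\tilde\pi_i$) and using $\partial_{t_a}=\sum_i \frac{1}{z_i}\frac{\partial z_i}{\partial t_a}\theta_i$, a direct computation shows $\partial_{t_a}\tilde{\mathbf{\Pi}}=\mathsf{A}_{\partial_{t_a}}\tilde{\mathbf{\Pi}}$ with $\mathsf{A}_{\partial_{t_a}}$ exactly the right-hand side of \eqref{eq:flat}. Hence solving \eqref{eq:flat}, with $\mathsf{A}_{\partial_{t_a}}$ the modular matrix \eqref{eq:modvf}, is equivalent to showing that $\tilde{\mathbf{\Pi}}$ is a fundamental solution of $\partial_{t_a}Y=\mathsf{A}_{\mathsf{R}_a}Y$.

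Second, I would substitute the proposed solution $\mathsf{s}_0=(X^0)^{-1}$, $t_a=X^a/X^0$ from \eqref{eq:isoms}. The first row of $\tilde{\mathbf{\Pi}}$ is then $(X^0)^{-1}(X^0,X^1,X^2,X^3)=(1,t_1,t_2,t_3)$, writing $t_3:=X^3/X^0$. Requiring rows $2,3$ of $\tilde{\mathbf{\Pi}}$ to equal $\partial_{t_a}$ of the first row forces precisely the relations \eqref{eq:re}: expanding $\tilde{\mathbf{\Pi}}_{a+1,j}=\mathsf{s}_a X^{j-1}+\mathsf{s}_0\sum_i \frac{1}{z_i}\frac{\partial z_i}{\partial t_a}\theta_i X^{j-1}$ and comparing with $\partial_{t_a}(X^{j-1}/X^0)$ yields $\mathsf{s}_{a,i}=\frac{1}{z_i}\frac{\partial z_i}{\partial t_a}\mathsf{s}_0$ and $\mathsf{s}_a=\partial_{t_a}\mathsf{s}_0$. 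With this, rows $1,2,3$ of $\tilde{\mathbf{\Pi}}$ are $(1,t_1,t_2,t_3)$ and its first $t_a$-derivatives, and since the rows of $\tilde{\mathbf{\Pi}}$ are linearly independent, the identity $\partial_{t_a}\tilde\pi_1=\tilde\pi_{a+1}$ immediately gives the top row $(0,\delta^b_a,0)$ of $\mathsf{A}_{\mathsf{R}_a}$; in particular the diagonal entry and the $(1,4)$ entry vanish.

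The remaining and hardest part is the bottom row. Differentiating once more gives $\partial_{t_b}\tilde\pi_{a+1}=(0,0,0,\partial_{t_a}\partial_{t_b}t_3)$, so to read off rows $2,3,4$ I must show that the last row $\tilde\pi_4=\mathsf{s}_{3,0}\pi_1+\mathsf{s}_{3,i}\pi_{i+1}+\mathsf{s}_{3,3}\pi_4$ (with $\pi_k$ the rows of $\mathbf{\Pi}$) is the constant vector proportional to $e_4=(0,0,0,1)$. Two inputs combine here. The intersection condition $\mathsf{S}Q_\omega\mathsf{S}^{\mathrm{tr}}=\Phi$ reads, via the Riemann bilinear relations, $\tilde{\mathbf{\Pi}}\,Q^*\,\tilde{\mathbf{\Pi}}^{\mathrm{tr}}=\Phi$ for a constant matrix $Q^*$; differentiating gives $\mathsf{A}_{\partial_{t_a}}\Phi+\Phi\,\mathsf{A}_{\partial_{t_a}}^{\mathrm{tr}}=0$, and feeding in the already-determined top row forces the $(4,4)$ and $(4,1)$ entries to vanish and identifies the $(2,4),(3,4)$ entries with couplings equal to the constant intersection numbers $C_{ab}$, so that $\mathsf{C}^{alg}_{ab}=\mathsf{s}_{a,i}\mathsf{s}_{b,j}\mathsf{Y}_{ij}$ is indeed constant. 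This antisymmetry alone, however, leaves the middle $2\times2$ block and the two entries in the last row undetermined up to three residual functions; their vanishing is exactly the assertion that $\tilde\pi_4\propto e_4$ is constant, which I would extract from the explicit values of $\mathsf{s}_{3,0},\mathsf{s}_{3,i},\mathsf{s}_{3,3}$ fixed by the pairing together with the second-order Picard–Fuchs relations \eqref{eq:pff}. I expect this to be the delicate step. Once it is in place, $\mathsf{A}_{\partial_{t_a}}$ coincides with \eqref{eq:modvf}, the $\partial_{t_a}$ are the modular vector fields $\mathsf{R}_a$, the couplings are $t$-independent, and the proposed $\mathsf{s}_0,t_a$ together with \eqref{eq:re} solve \eqref{eq:flat}.
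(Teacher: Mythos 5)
Your reduction is set up correctly, and it is the natural way to organize what the paper itself compresses into one sentence (its entire proof is ``The proof is computational,'' plus a pointer to \cite{Alim:2014} for a special-K\"ahler argument). The equivalence of \eqref{eq:flat} with the statement that $\tilde{\mathbf{\Pi}}=\mathsf{S}\mathbf{\Pi}$ is a fundamental solution of $\partial_{t_a}Y=\mathsf{A}_{\mathsf{R}_a}Y$, the derivation of \eqref{eq:re} by matching the top row (your $\mathsf{s}_a=\partial_{t_a}\mathsf{s}_0$ agrees with the middle expression of \eqref{eq:re}; the right-most expression $\partial_{t_a}\log\mathsf{s}_0$ there differs from it by a factor of $\mathsf{s}_0$ and is a typo in the paper), and the bookkeeping from $\mathsf{A}\Phi+\Phi\mathsf{A}^{\mathrm{tr}}=0$, including the count of exactly three residual functions, are all sound. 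The problem is that the proposal stops precisely where the proof has to do its work. The entire nontrivial content of the proposition is the vanishing of those three residual functions, i.e.\ the claim that $\tilde\pi_4$ is a \emph{constant} multiple of $e_4$, equivalently that $t_3=X^3/X^0$ has constant Hessian $\partial_a\partial_b t_3$. Writing that you ``would extract'' this from the explicit $\mathsf{s}_{3,0},\mathsf{s}_{3,i},\mathsf{s}_{3,3}$ and \eqref{eq:pff}, and that you ``expect this to be the delicate step,'' names the step but does not prove it; as written, this is an outline with the decisive verification missing. It can be closed either by brute force, substituting the explicit $G_i$ of Proposition \ref{prop:GKZ} and Appendix B into \eqref{eq:flat}, or more cleanly via the Lian--Yau factorization \eqref{eq:fac}, which gives closed expressions for $X^0$, $t_a$ and the couplings; but one of these computations must actually be carried out.

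There is also a circularity you should repair. Your antisymmetry step differentiates $\tilde{\mathbf{\Pi}}\,Q^*\,\tilde{\mathbf{\Pi}}^{\mathrm{tr}}=\Phi$ assuming $\Phi$ (in particular its middle block) is constant, and then concludes that ``$\mathsf{C}^{alg}_{ab}=\mathsf{s}_{a,i}\mathsf{s}_{b,j}\mathsf{Y}_{ij}$ is indeed constant.'' Constancy of that block is an \emph{input} to the differentiation, not an output of it. Once $\mathsf{s}_{a,i}$ is pinned down by \eqref{eq:re}, the identity $\mathsf{s}_{a,i}\mathsf{s}_{b,j}\mathsf{Y}_{ij}=C_{ab}$ (constancy of the Yukawa couplings transported to the flat coordinates $t_a$ and normalized by $(X^0)^{-2}$) is itself a nontrivial identity among the periods, on the same footing as $\tilde\pi_4\propto e_4$; it is exactly what guarantees that your proposed $\mathsf{S}$ satisfies the pairing condition $\mathsf{S}Q_\omega\mathsf{S}^{\mathrm{tr}}=\Phi$ at all, i.e.\ that the proposed solution lies on $\mathsf{T}$. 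Both identities are true, and both are what the paper's ``computational'' proof is implicitly checking; your argument becomes a proof only once these two checks are done.
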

	\begin{proof}
		The proof is computational. For a proof using the special K\"ahler structure of $\mathsf{B}$ see \cite{Alim:2014}.
	\end{proof}
	\begin{cor}
		With this choice, we find that $\mathsf{C}_{ab}^{alg}$ are $\mathsf{C}_{11}^{alg}=C_{HH}, \mathsf{C}_{12}^{alg}=\mathsf{C}_{21}^{alg}=C_{HL}$ and $\mathsf{C}_{22}^{alg}=C_{LL}=0$, which finishes the proof of Theorem \ref{thm:mvf}.
	\end{cor}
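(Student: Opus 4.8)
The plan is to identify $\mathsf{C}_{ab}^{alg}$ with the normalized Griffiths--Yukawa coupling written in the flat coordinates $t_a$, and then to use the fact that on a surface the holomorphic two-form squares to zero to force this coupling to be constant. With the choice $\mathsf{s}_0=(X^0)^{-1}$ and $t_a=X^a/X^0$ supplied by the preceding Proposition, the first basis vector is the normalized period $\alpha_1=\omega_1/X^0$, and the shape \eqref{eq:modvf} of $\mathsf{A}_{\mathsf{R}_a}$ together with Griffiths transversality gives $\nabla_{\partial_{t_a}}\nabla_{\partial_{t_b}}\alpha_1\equiv \mathsf{C}_{ab}^{alg}\,\alpha_{m+2}\pmod{\mathcal{F}^1_\iota}$. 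Equivalently, inserting $\mathsf{s}_{a,i}=\mathsf{s}_0\,\partial_{t_a}\log z_i$ from \eqref{eq:re} into the pairing relation $\mathsf{C}_{ab}^{alg}=\mathsf{s}_{a,i}\mathsf{s}_{b,j}\mathsf{Y}_{ij}$ exhibits $\mathsf{C}_{ab}^{alg}=(X^0)^{-2}\,\partial_{t_a}\log z_i\,\partial_{t_b}\log z_j\,\mathsf{Y}_{ij}$ as the Griffiths--Yukawa coupling transported to flat coordinates. First I would check that both descriptions compute $\partial_{t_a}\partial_{t_b}(X^3/X^0)$.

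The decisive step is constancy. Because $\dim_{\mathbb C}X_b=2$, one has $\omega_1\wedge\omega_1=0$, so the Riemann bilinear relations applied to the periods $(X^0,X^1,X^2,X^3)$ of $\omega_1$ give a single quadratic relation $\eta^{ij}X_iX_j=0$ with \emph{constant} integral coefficients determined by the intersection form on $H_2(X_b,\mathbb{Z})_\iota$. In the basis adapted to $\Phi$ this relation takes the form $-2X^0X^3+C_{ab}X^aX^b=0$. Dividing by $(X^0)^2$ and using $t_a=X^a/X^0$ produces the exact identity
\begin{equation}
\frac{X^3}{X^0}=\tfrac12\,C_{ab}\,t_at_b,
\end{equation}
so that the dual period is literally a quadratic polynomial in the flat coordinates, with constant Hessian $\partial_{t_a}\partial_{t_b}(X^3/X^0)=C_{ab}$. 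This is the K3 counterpart of the cubic prepotential for Calabi--Yau threefolds, the essential difference being that here the potential is exactly quadratic and therefore receives no further corrections.

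Putting the two steps together yields $\mathsf{C}_{ab}^{alg}=C_{ab}$, a constant matrix; since $\mathsf{R}_a$ is a derivation, $\mathsf{R}_{a_1}\mathsf{C}_{a_2a_3}^{alg}=0$ is then immediate, which is precisely the relation left open in Theorem \ref{thm:mvf}. It remains to identify these constants with the toric intersection numbers $C_{LL}=0$, $C_{LH}=C_{HL}=2d/(w_1w_2)$ and $C_{HH}=4d/(w_1w_2)$. I expect this last identification to be the main obstacle: it requires pinning down the normalization of the integral intersection form entering the Riemann relation, i.e.\ matching the quadric above with the lattice pairing on the transcendental lattice of $X_{\Delta^\circ}$ under the Batyrev--Borisov correspondence with $\mathrm{Pic}_{tor}(\Delta)$. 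The most direct verification is to evaluate the $z\to0$ asymptotics of $(X^0)^{-2}\,\partial_{t_a}\log z_i\,\partial_{t_b}\log z_j\,\mathsf{Y}_{ij}$ using the explicit Griffiths--Yukawa couplings of the previous Proposition and the leading behaviour $t_a\sim(2\pi i)^{-1}\log z_a$ of the mirror map; the constancy established above guarantees that all subleading terms cancel, leaving exactly the stated values as a consistency check.
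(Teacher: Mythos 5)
Your route is genuinely different from the paper's. The paper obtains this Corollary by exact computation: it substitutes the solution $\mathsf{s}_0=(X^0)^{-1}$, $t_a=X^a/X^0$, $\mathsf{s}_{a,i}=\mathsf{s}_0\,\partial_{t_a}\log z_i$ into the pairing condition $\mathsf{C}_{ab}^{alg}=\mathsf{s}_{a,i}\mathsf{s}_{b,j}\mathsf{Y}_{ij}$, using the closed-form Griffiths--Yukawa couplings and the explicit mirror maps and quasi-modular expressions (Theorem \ref{thm:LY} and the Proposition that follows the Corollary), and reads off the constants. Your first step is sound and agrees with this starting point: combining \eqref{eq:re} with the pairing relation does give $\mathsf{C}_{ab}^{alg}=(X^0)^{-2}\,\partial_{t_a}\log z_i\,\partial_{t_b}\log z_j\,\mathsf{Y}_{ij}$, and Griffiths transversality kills all cross terms, so this equals $-Q(\alpha_1,\nabla_{\partial_{t_a}}\nabla_{\partial_{t_b}}\alpha_1)$.

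The gap is a circularity in your decisive step. The Riemann bilinear relation does give a quadric $X\Sigma X^{\rm tr}=0$ with \emph{constant} coefficients, but the matrix $\Sigma$ is only known to be constant; the paper's $(X^0,X^1,X^2,X^3)$ are singled out as solutions of \eqref{eq:pff} by their logarithmic behaviour, not as periods over a cycle basis adapted to $\Phi$. The statement that in this solution basis the quadric reads $-2X^0X^3+C_{ab}X^aX^b=0$, with $C_{ab}$ the \emph{toric} intersection numbers of the A-side K3, is precisely the lattice-polarized mirror symmetry assertion that the Corollary encodes --- it is the thing to be proved, not an input. Moreover, the constancy of the Hessian, which you then use to promote the $z\to 0$ asymptotic check to an exact identity, itself requires the quadric to have no $(X^3)^2$ and $X^aX^3$ terms: for a general constant symmetric $\Sigma$, $F=X^3/X^0$ is an algebraic but non-polynomial function of $t_a$ and its Hessian is not constant. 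So constancy presupposes part of what your final asymptotic check was meant to establish. The circle can be broken: monodromy invariance of $\Sigma$, or more simply the growth argument (feed $t_a\to i\infty$ and $F\sim t^2$ into the quadric and kill coefficients degree by degree in $\log z$), forces $\Sigma_{44}=\Sigma_{a4}=0$ in the solution basis, hence $F$ is exactly quadratic in $t_a$; after that your constancy argument plus the leading asymptotics $\mathsf{Y}_{11}:\mathsf{Y}_{12}:\mathsf{Y}_{22}\to 2:1:0$ legitimately yield $\mathsf{C}_{11}^{alg}:\mathsf{C}_{12}^{alg}:\mathsf{C}_{22}^{alg}=C_{HH}:C_{HL}:C_{LL}$, with the overall scale fixed by the normalization $c$ of $\omega_1$ (a normalization the paper also fixes only implicitly). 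With that supplement your argument is a correct and more conceptual alternative to the paper's direct computation, explaining \emph{why} the couplings are constant rather than verifying it.
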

	
	\noindent We define weight 1 quasi-modular forms for genus zero congruence subgroups in Appendix A.
	\begin{thm}\label{thm:LY}
		\cite{Lian:19951,Lian:19952} The fundamental period $X^0$ of a mirror to a projective elliptic K3 surface factorizes
		\begin{equation}\label{eq:fac}
			X^0=A(\tau_1)A(\tau_2),
		\end{equation}
		with $\tau_1=t_1, \tau_2=t_1+t_2$ and the weight 1 modular forms $A$ are given in Appendix A. For each model, the quasi-modular form $A$ is the quasi-modular form associated to the monodromy group of the respective elliptic fiber. This can be checked by comparison with \eqref{eq:holp}.
	\end{thm}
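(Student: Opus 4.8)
The plan is to identify each factor $A(\tau)$ as the normalised holomorphic period of the corresponding elliptic fibre, and then to show that the two-variable Picard–Fuchs system \eqref{eq:pff} decouples, under a suitable change of coordinates, into two copies of the elliptic-fibre equation. First I would recall from Appendix A that the weight-$1$ form $A(\tau)$ is exactly the holomorphic solution $\phi(x)$, normalised by $\phi(0)=1$, of the second-order Fuchsian (hypergeometric) operator $L_{\mathrm{ell}}$ governing the periods of the elliptic fibre $\mathbb{P}(\ldots)[d]$, pulled back along the elliptic mirror map; that is, $\tau$ is the ratio of the two solutions of $L_{\mathrm{ell}}$ at the relevant cusp and $A=\phi$ is the holomorphic one. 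This reduces the theorem to a purely differential statement: that the product $A(\tau_1)A(\tau_2)$, regarded as a function of $z_1,z_2$ through $\tau_1=t_1$ and $\tau_2=t_1+t_2$, is annihilated by both operators in \eqref{eq:pff} and equals $1$ at the large-complex-structure point.

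Next I would isolate the elliptic structure inside \eqref{eq:pff}. The operator $\theta_1(\theta_1-2\theta_2)-\mu z_1(\nu\theta_1+\nu-1)(\nu\theta_1+1)$ carries the fibre hypergeometric data through the parameters $\mu,\nu$, while $\theta_2^2-z_2(\theta_1-2\theta_2)(\theta_1-2\theta_2-1)$ supplies the base direction, and the recurring combination $\theta_1-2\theta_2$ is what singles out the second elliptic factor. The key step is to produce coordinates of multiplicative type, guessing $x_1\sim z_1$ and $x_2\sim z_1z_2$, in which the system becomes the external tensor product $L_{\mathrm{ell}}(x_1)\boxtimes L_{\mathrm{ell}}(x_2)$ of two copies of the fibre operator. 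Since the K3 periods behave as $X^1\sim (2\pi i)^{-1}X^0\log z_1$ and $X^2\sim(2\pi i)^{-1}X^0\log z_2$, the logarithmic solution attached to the second factor is $X^1+X^2\sim(2\pi i)^{-1}X^0\log(z_1z_2)$; this is the origin of the shift $\tau_2=t_1+t_2$ rather than $\tau_2=t_2$. Once the system is separated, $\phi(x_1)\phi(x_2)$ is visibly a solution, and by \eqref{eq:isoms} the two period ratios work out to $X^1/X^0=t_1$ and $(X^1+X^2)/X^0=t_1+t_2$, so that $\phi(x_1)\phi(x_2)=A(t_1)A(t_1+t_2)$.

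Finally I would invoke uniqueness: the holomorphic solution of \eqref{eq:pff} with leading term $1$ is unique, namely \eqref{eq:holp}, and $A(\tau_1)A(\tau_2)$ is such a solution, so the two coincide; the concluding comparison of $q$-expansions with \eqref{eq:holp} then only fixes normalisations and confirms that the level $N$ is read off from the fibre type. I expect the genuine obstacle to be the separation step: exhibiting the explicit multiplicative coordinates $x_1,x_2$ and verifying that \eqref{eq:pff} really factorises as a tensor product of two hypergeometric operators, while correctly tracking the combination $\tau_2=t_1+t_2$ through the change of basis between the GKZ solutions and the products of elliptic solutions. The geometric reason underlying this decoupling — that the transcendental Hodge structure of these K3 surfaces is the tensor product $H^1(E_1)\otimes H^1(E_2)$ of two elliptic curves, so that the holomorphic two-form is $\omega_{E_1}\wedge\omega_{E_2}$ and its period factorises — is what guarantees the separation exists, and I would use it as the guide for finding the coordinates.
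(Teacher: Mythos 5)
The paper itself offers no proof of this theorem: it is quoted from Lian--Yau, and the only verification indicated is the remark that the factorization ``can be checked by comparison with \eqref{eq:holp}'', i.e.\ by inverting the mirror map and comparing $q$-expansions of $A(\tau_1)A(\tau_2)$ with the power series of the fundamental period. Your proposal is therefore a genuinely different, structural route, and its first and last steps are sound: identifying $A$ with the normalized fiber period pulled back along the elliptic mirror map is classical, the uniqueness of the holomorphic solution of \eqref{eq:pff} with leading term $1$ is the correct way to match the two sides, and the Shioda--Inose rationale (transcendental Hodge structure of type $H^1(E_1)\otimes H^1(E_2)$) is the right geometric picture.

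The gap is the separation step, and as stated it fails. If there were coordinates with $x_1$ a function of $z_1$ alone and $x_2$ a function of $z_1z_2$ alone in which \eqref{eq:pff} became $L_{\mathrm{ell}}(x_1)\boxtimes L_{\mathrm{ell}}(x_2)$, then the holomorphic solution would factor as $f(z_1)\,g(z_1z_2)$, so the coefficient of $z_1^nz_2^m$ in \eqref{eq:holp} would have the product form $a_{n-m}b_m$. This is contradicted immediately by \eqref{eq:holp}: the coefficient of $z_1z_2$ vanishes (the sum runs over $n\geq 2m$), forcing $b_1=0$, while the coefficient of $z_1^2z_2$ is nonzero (it equals $27720$ in the $E_8$ case); the variant guess $x_2\sim z_1^2z_2$ is ruled out the same way, since $c_{1,0}\,c_{2,1}=60\cdot 27720\neq c_{3,1}=24504480$. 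The true separating variables are the hauptmoduls $\alpha_a=\bigl(C(\tau_a)/A(\tau_a)\bigr)^r$, and the paper's inverse mirror map (proof of the proposition immediately following the theorem) shows that $(z_1,z_2)$ are \emph{symmetric} functions of them,
\begin{equation*}
d_Nz_1=\alpha_1+\alpha_2-2\alpha_1\alpha_2,\qquad d_N^2z_1^2z_2=\alpha_1\alpha_2(1-\alpha_1)(1-\alpha_2),
\end{equation*}
so that $\alpha_1,\alpha_2$ are the two roots of a quadratic over $\mathbb{C}(z_1,z_2)$, each depending on both $z_1$ and $z_2$ (only their leading terms are $d_Nz_1$ and $d_Nz_1z_2$). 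Consequently \eqref{eq:pff} does not decompose over the $(z_1,z_2)$-base at all --- the monodromy around the image of the locus $\alpha_1=\alpha_2$ exchanges the two rank-two factors, which is the $\tau_1\leftrightarrow\tau_2$ symmetry --- and it becomes a tensor product only after pulling back along the $2{:}1$ cover $(\alpha_1,\alpha_2)\mapsto(z_1,z_2)$. If you instead read your ``$x_1\sim z_1$, $x_2\sim z_1z_2$'' loosely, as coordinates merely asymptotic to these monomials, then the required coordinates are exactly $\alpha_1,\alpha_2$, and exhibiting them (equivalently, the quadratic relation above) is the actual content of Lian--Yau's theorem, so the argument becomes circular. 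To close the gap you would need an independent construction of this double cover, e.g.\ from the Shioda--Inose structure or from Lian--Yau's hypergeometric identities, rather than a coordinate guess on the base.
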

	
	\begin{prop}
		There is an isomorphism
		\begin{equation}
			\mathcal{O}_{\mathsf{T}}\cong \widetilde{\mathsf{M}}(\Gamma_0(N)\times\Gamma_0(N)),\label{eq:iso3}
		\end{equation}
		between the local ring $\mathcal{O}_{\mathsf{T}}$ and the graded ring of quasi-modular forms of the modular subgroup $\Gamma_0(N)$ in two variables. The level $N$ of the congruence subgroup is the same as in the monodromy group of the elliptic fibre of the elliptic K3 surface.
	\end{prop}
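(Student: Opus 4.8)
The plan is to construct the isomorphism \eqref{eq:iso3} explicitly by pulling back the coordinate functions of $\mathsf{T}$ along the period map and reading off their $q$-expansions as quasi-modular forms. Recall that $\mathsf{T}$ is $6$-dimensional with coordinates $\mathsf{s}_0,\mathsf{s}_1,\mathsf{s}_2,\mathsf{s}_{1,1}$ together with the two base coordinates, which after the mirror map \eqref{eq:isoms} become $t_1,t_2$. Since the full quasi-modular ring $\widetilde{\mathsf{M}}(\Gamma_0(N))$ of any genus-zero congruence subgroup is generated over $\mathbb{C}$ by the weight-$1$ form $A$ of \eqref{eq:fac}, a further modular generator, and the $E_2$-type quasi-modular generator, it has transcendence degree $3$; hence its two-variable analogue $\widetilde{\mathsf{M}}(\Gamma_0(N)\times\Gamma_0(N))\cong\widetilde{\mathsf{M}}(\Gamma_0(N))\otimes_{\mathbb{C}}\widetilde{\mathsf{M}}(\Gamma_0(N))$ has transcendence degree $6$, matching $\dim\mathsf{T}$. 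The strategy is therefore to exhibit a ring homomorphism carrying each generator of $\mathcal{O}_{\mathsf{T}}$ to a quasi-modular form in $\tau_1,\tau_2$, and then to show that it is surjective onto, and free of relations beyond those of, $\widetilde{\mathsf{M}}\otimes\widetilde{\mathsf{M}}$.

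The central simplification is the factorization $X^0=A(\tau_1)A(\tau_2)$ of Theorem \ref{thm:LY}, with $\tau_1=t_1$ and $\tau_2=t_1+t_2$, which decouples the two modular directions. Under this change of variables the chain rule gives $\partial_{t_1}=\partial_{\tau_1}+\partial_{\tau_2}$ and $\partial_{t_2}=\partial_{\tau_2}$. I would then compute the images of the coordinates directly from \eqref{eq:isoms} and \eqref{eq:re}: $\mathsf{s}_0=(X^0)^{-1}=A(\tau_1)^{-1}A(\tau_2)^{-1}$, while $\mathsf{s}_1=\partial_{t_1}\log\mathsf{s}_0=-\big(\partial_{\tau_1}\log A(\tau_1)+\partial_{\tau_2}\log A(\tau_2)\big)$ and $\mathsf{s}_2=\partial_{t_2}\log\mathsf{s}_0=-\partial_{\tau_2}\log A(\tau_2)$. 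The logarithmic derivative $\partial_\tau\log A$ is, by the relations of Appendix A, the weight-$1$ quasi-modular (Eisenstein-type) generator of $\Gamma_0(N)$; this is where the $E_2$-anomaly, and thus genuine quasi-modularity, enters. The inverse mirror map $z_a=z_a(t_1,t_2)$ pulls back to weight-$0$ modular functions (the Hauptmoduls of the two factors), and the last enhancement coordinate $\mathsf{s}_{1,1}=\frac{1}{z_1}\frac{\partial z_1}{\partial t_1}\mathsf{s}_0$ combines such a derivative with $\mathsf{s}_0$ to produce the remaining modular generator.

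Each image manifestly separates into the two variables, exhibiting the tensor-product structure: $\mathsf{s}_0$ is a product $A(\tau_1)^{-1}\otimes A(\tau_2)^{-1}$, and the combinations $\mathsf{s}_1-\mathsf{s}_2=-\partial_{\tau_1}\log A(\tau_1)$ and $\mathsf{s}_2=-\partial_{\tau_2}\log A(\tau_2)$ isolate the weight-$1$ quasi-modular generator of each factor, while $z_a$ and $\mathsf{s}_{1,1}$ supply the Hauptmoduls and the second modular generators. Surjectivity thus reduces to the single-variable statement $\mathcal{O}_{\mathsf{T}_E}\cong\widetilde{\mathsf{M}}(\Gamma_0(N))$ for the enhanced moduli space of the elliptic fibre, established in the GMCD program for elliptic curves \cite{Movasati:20113,Alim:2014}, the product group then yielding the graded tensor product. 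Injectivity, i.e.\ algebraic independence of the six images, follows by comparing leading terms of their $q_1,q_2$-expansions with $q_a=e^{2\pi i\tau_a}$.

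The main obstacle I anticipate is twofold. First, one must make precise the grading under which $\mathcal{O}_{\mathsf{T}}$ is isomorphic to the \emph{graded} ring $\widetilde{\mathsf{M}}$: the appearance of $\mathsf{s}_0=(X^0)^{-1}$, of logarithmic derivatives $\partial_\tau\log A$, and of the fractional weight-$1$ object $A$ itself (for $N=1$, where $A=E_4^{1/4}$ is not a polynomial in the classical generators) means one must work in the localization inverting $A(\tau_1)A(\tau_2)$ — equivalently, away from the discriminant locus, as in the hypothesis — and track weights so that the balanced combinations land in honest quasi-modular forms. Second, and most delicate, is confirming that the enhancement directions $\mathsf{s}_1,\mathsf{s}_2,\mathsf{s}_{1,1}$ surject precisely onto the quasi-modular \emph{and} modular generators of each factor — in particular that $\partial_\tau\log A$ is exactly the weight-$1$ quasi-modular generator of $\Gamma_0(N)$ and not merely some quasi-modular form — which is what upgrades the classical period factorization of Lian--Yau into the full ring isomorphism \eqref{eq:iso3}.
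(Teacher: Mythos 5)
Your proposal is correct and takes essentially the same route as the paper's own proof: both work in the local chart $\{z_i,\mathsf{s}_0,\mathsf{s}_a,\mathsf{s}_{1,1}\}$, take the inverse mirror map as the isomorphism, and use the Lian--Yau factorization \eqref{eq:fac} together with \eqref{eq:isoms} and \eqref{eq:re} to express every coordinate as an explicit quasi-modular form in $(\tau_1,\tau_2)$. The paper simply records the resulting closed-form expressions in terms of $\alpha_a=\left(C(\tau_a)/A(\tau_a)\right)^r$, $A$ and $E$ — its $s_a$ agree with your $\mathsf{s}_1-\mathsf{s}_2$ and $\mathsf{s}_2$ up to the linear change of variables $\tau_1=t_1$, $\tau_2=t_1+t_2$ — which is precisely the computation you outline, the only slip being terminological: $\partial_\tau\log A$ is a weight-two, not weight-one, quasi-modular form.
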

	
	\begin{proof}
		The independent variables $\mathsf{t}:=\{z_i,\mathsf{s}_0,\mathsf{s}_a,\mathsf{s}_{1,1}\}_{i,a=1,2}$ form a local chart for $\mathsf{T}$. Denote by $\mathcal{O}_\mathsf{T}$ the local ring at $\mathsf{t}\in\mathsf{T}$. Theorem \ref{thm:LY} provides an isomorphism between the ring $\mathcal{O}_\mathsf{T}$ and the ring of quasi-modular forms in two variables. The isomorphism is given by the inverse mirror map $z_i=z_i(t_1,t_2)$. Fix $\alpha_a=\left(\frac{C(\tau_a)}{A(\tau_a)}\right)^r$, $r$ as in Appendix A. It satisfies
		\begin{equation}
			\partial_{\tau_a}\alpha_b=\delta_a^b\alpha_b(1-\alpha_b)A^2(\tau_b).
		\end{equation}
		In terms of these the inverse mirror map is
		\begin{equation}
			\begin{split}
				z_1=\frac{1}{d_N}(\alpha_1+\alpha_2-2\alpha_1\alpha_2),\qquad z_2=\frac{1}{d_N^2}\frac{\alpha_1\alpha_2(1-\alpha_1)(1-\alpha_2)}{z_1^2}.
			\end{split}
		\end{equation}
		The remaining elements of the ring are found from \eqref{eq:isoms}, \eqref{eq:re}, \eqref{eq:fac} and \eqref{eq:ring}
		\begin{equation}\begin{split}
				s_a&=-\frac{1}{2r}\left(E(\tau_a)+\frac{2C^r(\tau_a)-A^r(\tau_a)}{A^{r-2}(\tau_a)}\right),\\
				\mathsf{s}_{1,1}&=\frac{\alpha_1(1-\alpha_1)(1-2\alpha_2)}{\alpha_1(1-\alpha_2)+\alpha_2(1-\alpha_1)}\frac{A(\tau_1)}{A(\tau_2)}.
			\end{split}
		\end{equation}
	\end{proof}
	
	\section{The Gauss-Manin Lie algebra}
	\subsection{Algebraic Group acting on $\mathsf{T}$}
	We define a Lie group $\mathsf{G}$ by
	\begin{equation}
		\mathsf{G}=\{\mathsf{g}\in GL(m+2,\mathbb{C})|~ \mathsf{g}~\mathrm{block~lower~triangular~and}~\mathsf{g}\Phi\mathsf{g}^{\rm tr}=\Phi\}.\label{eq:lg}
	\end{equation}
	It acts on $\mathsf{T}$ from the right as
	\begin{equation}
		(X,\alpha)\mathbf{\cdot} \mathsf{g}=(X,\alpha^{\rm tr}\mathsf{g}),
	\end{equation}
	where $\alpha=(\alpha_1,\ldots,\alpha_{m+2})^{\rm tr}$ is the special basis defined in the section \ref{sec:ms}, $ \mathsf{g}\in\mathsf{G}$, and $\alpha^{\rm tr}\mathsf{g}$ is the standard matrix product. The condition $\mathsf{g}\Phi\mathsf{g}^{\rm tr}=\Phi$ fixes $\dim(\mathsf{G})=\dim(\mathsf{T})-2=4$. The group $\mathsf{G}$ is generated by two elements isomorphic to the multiplicative group $\mathbb{C}^*$ and two elements isomorphic to the additive group $\mathbb{C}$. The following lemma gives the generators of $\mathsf{G}$:
	\begin{lem}
		For any $\mathsf{g}\in\mathsf{G}$ there are unique elements $\mathsf{g}_i\in\mathsf{G},~i=1,2,3,4$ such that $\mathsf{g}$ can be written as a product of at most four $\mathsf{g}_i$. For families \eqref{eq:mirror}, $\mathsf{g}_i$ are given by
		\begin{equation}
			\begin{split}
				\mathsf{g}_1&=\begin{pmatrix}
					\mathsf{h}_0&0&0&0\\0&1&0&0\\0&0&1&0\\0&0&0&\mathsf{h}_0^{-1}
				\end{pmatrix},\quad \mathsf{g}_2=\begin{pmatrix}
					1&0&0&0\\ 0&\mathsf{h}_{11}&\mathsf{h}_{21}&0\\ 0&\mathsf{h}_{12}&\mathsf{h}_{22}&0\\0&0&0&1
				\end{pmatrix},\\
				\mathsf{g}_3&=\begin{pmatrix}
					1&0&0&0\\ \mathsf{C}_{11}^{alg}\mathsf{h}_1&1&0&0\\ \mathsf{C}_{12}^{alg}\mathsf{h}_1&0&1&0\\0&\mathsf{h}_1&0&1
				\end{pmatrix},\quad \mathsf{g}_4=\begin{pmatrix}
					1&0&0&0\\ \mathsf{C}_{12}^{alg}\mathsf{h}_2&1&0&0\\ \mathsf{C}_{22}^{alg}\mathsf{h}_2&0&1&0\\0&0&\mathsf{h}_2&1
				\end{pmatrix},
			\end{split}
		\end{equation}
		where $\mathsf{h}_{ij},~i,j=1,2$ satisfy the constraints
		\begin{equation}
			\sum_{i,j=1,2}\mathsf{C}_{ij}^{alg}\mathsf{h}_{ik}\mathsf{h}_{jl}=\mathsf{C}_{lk}^{alg}.
		\end{equation}
		The constraints can be solved by a simple algebraic manipulation, which yields only one independent parameter. We fix $\mathsf{h}_{11}=\mathsf{C}_{12}^{alg}\mathsf{h}_3$ as the independent parameter and express $\mathsf{h}_{ij}$ in terms of $\mathsf{h}_3$.
	\end{lem}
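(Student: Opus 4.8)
The plan is to make the defining equations of $\mathsf{G}$ completely explicit and then to recognize the asserted factorization as the Levi decomposition of $\mathsf{G}$ into its reductive part and its abelian unipotent radical. First I would write a general block-lower-triangular element with respect to the $(1,2,1)$ grading underlying $\Phi$, say
\[
\mathsf{g}=\begin{pmatrix} a & 0 & 0\\ \mathsf{v} & M & 0\\ c & \mathsf{w}^{\mathrm{tr}} & d\end{pmatrix},
\]
with $a,c,d$ scalars, $\mathsf{v}$ a column, $\mathsf{w}^{\mathrm{tr}}$ a row and $M$ a $2\times2$ block, and impose $\mathsf{g}\Phi\mathsf{g}^{\mathrm{tr}}=\Phi$ block by block. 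This produces $ad=1$, the orthogonality relation $MCM^{\mathrm{tr}}=C$ for the middle block (writing $C=(\mathsf{C}^{alg}_{ab})$), and the two relations $\mathsf{v}=a\,MC\mathsf{w}$ and $c=\tfrac{a}{2}\,\mathsf{w}^{\mathrm{tr}}C\mathsf{w}$ which determine $\mathsf{v}$ and $c$. Hence a point of $\mathsf{G}$ is freely described by $(a,M,\mathsf{w})$ with $a\in\mathbb{C}^*$, $M\in O(C)$ and $\mathsf{w}\in\mathbb{C}^2$; since $O(C)$ is one-dimensional for the nondegenerate form $C$, this gives $1+1+2=4$ free parameters, recovering $\dim\mathsf{G}=4$.

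Next I would verify that each $\mathsf{g}_i$ solves these equations, so that $\mathsf{g}_i\in\mathsf{G}$. For $\mathsf{g}_1$ this is immediate ($M=I$, $\mathsf{w}=0$). For $\mathsf{g}_2$ one checks, keeping careful track of the transpose between the parametrization $\mathsf{h}_{ij}$ and the matrix $M$, that the stated constraint $\sum_{ij}\mathsf{C}^{alg}_{ij}\mathsf{h}_{ik}\mathsf{h}_{jl}=\mathsf{C}^{alg}_{lk}$ is exactly $MCM^{\mathrm{tr}}=C$ for its middle block; solving this $2\times2$ relation leaves a single free parameter on the identity component, which is normalized to $\mathsf{h}_3$ via $\mathsf{h}_{11}=\mathsf{C}^{alg}_{12}\mathsf{h}_3$ and determines the remaining $\mathsf{h}_{ij}$. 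The generators $\mathsf{g}_3,\mathsf{g}_4$ have $a=d=1$, $M=I$ and a single nonzero component of $\mathsf{w}$, with the remaining entries $\mathsf{v}=C\mathsf{w}$ and $c=\tfrac12\mathsf{w}^{\mathrm{tr}}C\mathsf{w}$ forced by the relations above.

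I would then organize the factorization structurally. The block-diagonal locus $L=\{\mathsf{v}=0,\ \mathsf{w}=0,\ c=0\}\cong\mathbb{C}^*\times O(C)$ is the reductive part, generated by $\mathsf{g}_1$ (the torus, read off from the $(0,0)$ entry) and $\mathsf{g}_2$ (the orthogonal block), while $U=\{a=d=1,\ M=I\}$ is the unipotent radical. A direct computation gives $\mathsf{g}(\mathsf{w})\mathsf{g}(\mathsf{w}')=\mathsf{g}(\mathsf{w}+\mathsf{w}')$, so $U\cong(\mathbb{C}^2,+)$ is abelian and $\mathsf{g}_3\mathsf{g}_4=\mathsf{g}\big((\mathsf{h}_1,\mathsf{h}_2)^{\mathrm{tr}}\big)$ sweeps out all of $U$. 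Since $U$ is normal, one has $\mathsf{G}=L\ltimes U$, so every $\mathsf{g}$ factors uniquely as $\mathsf{g}=l\,u$ with $l\in L$ and $u\in U$; the commuting torus and orthogonal factors of $l$ are read off independently from the $(0,0)$ entry and the middle block, yielding unique $\mathsf{g}_1,\mathsf{g}_2$, and $u$ fixes $\mathsf{h}_1,\mathsf{h}_2$ uniquely. Concretely, I would extract $\mathsf{h}_0=a$ and $\mathsf{h}_3$ from $\mathsf{g}$, form $(\mathsf{g}_1\mathsf{g}_2)^{-1}\mathsf{g}$, check it lies in $U$, and read its lower row block to obtain $\mathsf{h}_1,\mathsf{h}_2$; trivial parameters simply drop the corresponding factor, which is the content of ``at most four''.

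The main obstacle I anticipate is not conceptual but bookkeeping: getting the transpose conventions right so that the displayed constraint genuinely coincides with $MCM^{\mathrm{tr}}=C$, and solving the $2\times2$ orthogonality relation cleanly enough to confirm that it carries exactly one free parameter with the chosen normalization $\mathsf{h}_3$. Once the defining equations of $\mathsf{G}$ and the forced forms of $\mathsf{v}$ and $c$ are in hand, both existence and uniqueness of the factorization follow from the uniqueness of the Levi decomposition together with the direct-product structure of $L$ and the free abelian structure of $U$.
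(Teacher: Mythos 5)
Your block-by-block reduction of the defining equations of $\mathsf{G}$ is correct: writing $\mathsf{g}$ with blocks $(a,\mathsf{v},c,M,\mathsf{w},d)$, the condition $\mathsf{g}\Phi\mathsf{g}^{\mathrm{tr}}=\Phi$ is exactly $ad=1$, $MCM^{\mathrm{tr}}=C$, $\mathsf{v}=aMC\mathsf{w}$ and $c=\tfrac{a}{2}\mathsf{w}^{\mathrm{tr}}C\mathsf{w}$, and the organization of the factorization as $\mathsf{G}=L\ltimes U$ with $L\cong\mathbb{C}^*\times O(C)$ and $U\cong(\mathbb{C}^2,+)$ is the right strategy; note the paper states this lemma without any proof, so your argument is not merely a variant of the paper's — it is the proof the paper omits.

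However, there is a genuine problem in your verification step, and it is not transpose bookkeeping. Your own forced relation $c=\tfrac{a}{2}\mathsf{w}^{\mathrm{tr}}C\mathsf{w}$, applied to $\mathsf{g}_3$ (which has $a=d=1$, $M=I$, $\mathsf{w}^{\mathrm{tr}}=(\mathsf{h}_1,0)$), requires the $(4,1)$ entry to be $\tfrac12\mathsf{C}_{11}^{alg}\mathsf{h}_1^2$, whereas the displayed matrix has $0$ there. Since $\mathsf{C}_{11}^{alg}=C_{HH}=2C_{HL}\neq0$ (by the Corollary preceding the lemma), these disagree: a direct computation gives $(\mathsf{g}_3\Phi\mathsf{g}_3^{\mathrm{tr}})_{44}=\mathsf{C}_{11}^{alg}\mathsf{h}_1^2\neq0=\Phi_{44}$, so the displayed $\mathsf{g}_3$ is \emph{not} an element of $\mathsf{G}$. ($\mathsf{g}_4$ survives only because $\mathsf{C}_{22}^{alg}=C_{LL}=0$.) So the sentence in your proof asserting that the remaining entries of $\mathsf{g}_3,\mathsf{g}_4$ are ``forced by the relations above'' is false for $\mathsf{g}_3$ as printed; you have in effect silently replaced $\mathsf{g}_3$ by the corrected element $u\bigl((\mathsf{h}_1,0)^{\mathrm{tr}}\bigr)$, whose $(4,1)$ entry is $\tfrac12\mathsf{C}_{11}^{alg}\mathsf{h}_1^2$ — consistent with $\exp(t\mathfrak{g}_3)$, whose $(4,1)$ entry is $\tfrac{t^2}{2}\mathsf{C}_{11}^{alg}$. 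With that correction your Levi-type argument does go through, but then you are proving a corrected statement and should say so explicitly; as written, the proof certifies a matrix identity that fails. A second, smaller caveat: since $O(C)$ has two connected components and the normalization $\mathsf{h}_{11}=\mathsf{C}_{12}^{alg}\mathsf{h}_3$ together with your restriction to the identity component parametrizes only one branch of the constraint, your factorization covers only the identity component of $\mathsf{G}$, not ``any $\mathsf{g}\in\mathsf{G}$''; either allow $\mathsf{g}_2$ to range over both solution branches or weaken the claim accordingly.
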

	
	\noindent The Lie algebra of $\mathsf{G}$ is given by
	\begin{equation}
		\mathrm{Lie}(\mathsf{G})=\{\mathfrak{g}\in\mathrm{Mat}(m+2,\mathbb{C})|~\mathfrak{g}~\mathrm{is~block~lower~triangular~and}~\mathfrak{g}\Phi+\Phi\mathfrak{g}=0\}.\label{eq:la}
	\end{equation}
	The Lie algebra $\mathrm{Lie}(\mathsf{G})$ is a Lie sub-algebra of $\mathrm{Vec}(\mathsf{T})$. The basis of $\mathrm{Lie}(\mathsf{G})$ can be constructed from the elements $\mathsf{g}_i\in\mathsf{G}$. We find
	\begin{equation}
		\begin{split}
			\mathfrak{g}_1&=\begin{pmatrix}
				1&0&0&0\\0&0&0&0\\0&0&0&0\\0&0&0&-1
			\end{pmatrix},\quad \mathfrak{g}_2=\begin{pmatrix}
				0&0&0&0\\ 0&\mathsf{C}_{12}^{alg}&-\mathsf{C}_{11}^{alg}&0\\ 0&\mathsf{C}_{22}^{alg}&-\mathsf{C}_{12}^{alg}&0\\0&0&0&0
			\end{pmatrix},\\
			\mathfrak{g}_3&=\begin{pmatrix}
				0&0&0&0\\ \mathsf{C}_{11}^{alg}&0&0&0\\ \mathsf{C}_{12}^{alg}&0&0&0\\0&1&0&0
			\end{pmatrix},\quad\mathfrak{g}_4=\begin{pmatrix}
				0&0&0&0\\ \mathsf{C}_{12}^{alg}&0&0&0\\ \mathsf{C}_{22}^{alg}&0&0&0\\0&0&1&0
			\end{pmatrix}.
		\end{split}
	\end{equation} 
	
	\subsection{The Gauss-Manin Lie algebra}
	The Gauss-Manin Lie algebra is defined to be the $\mathcal{O}_{\mathsf{T}}$ module generated by $\mathrm{Lie}(\mathsf{G})$ and the modular vector fields $\mathsf{R}_a$ in \eqref{eq:modvf}. We write the action of the modular vector fields $\mathsf{R}_a$ on $\alpha$ explicitly
	\begin{equation}
		\mathsf{A}_{\mathsf{R}_1}=\begin{pmatrix}
			0&1&0&0\\
			0&0&0&\mathsf{C}_{11}^{alg}\\
			0&0&0&\mathsf{C}_{12}^{alg}\\
			0&0&0&0
		\end{pmatrix},\qquad \mathsf{A}_{\mathsf{R}_2}=\begin{pmatrix}
			0&0&1&0\\
			0&0&0&\mathsf{C}_{12}^{alg}\\
			0&0&0&\mathsf{C}_{22}^{alg}\\
			0&0&0&0
		\end{pmatrix}.
	\end{equation}
	\begin{thm}\label{thm:mainthm}
		The Gauss-Manin Lie algebra, generated by $\mathfrak{g}_1, \mathfrak{g}_2, \mathfrak{g}_3, \mathfrak{g}_4$ and $\mathsf{A}_{\mathsf{R}_1}, \mathsf{A}_{\mathsf{R}_2}$, is isomorphic to $\mathrm{sl}_2(\mathbb{C})\oplus\mathrm{sl}_2(\mathbb{C})$.
	\end{thm}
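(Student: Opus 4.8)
The plan is to reduce the statement to a computation with the six explicit $4\times 4$ matrices and then to identify the resulting matrix Lie algebra. First I would observe that every entry of $\mathfrak{g}_1,\dots,\mathfrak{g}_4$ and of $\mathsf{A}_{\mathsf{R}_1},\mathsf{A}_{\mathsf{R}_2}$ is constant on $\mathsf{T}$: the only non-numeric entries are the $\mathsf{C}^{alg}_{ab}$, which equal the topological intersection numbers $C_{HH},C_{HL}$ and $C_{LL}=0$, and which satisfy $\mathsf{R}_a\mathsf{C}^{alg}_{bc}=0$ by Theorem \ref{thm:mvf}. Because the Gauss-Manin connection is flat, for $\mathsf{E},\mathsf{E}'\in\{\mathfrak{g}_i,\mathsf{R}_a\}$ one has $\mathsf{A}_{[\mathsf{E},\mathsf{E}']}=\mathsf{E}(\mathsf{A}_{\mathsf{E}'})-\mathsf{E}'(\mathsf{A}_{\mathsf{E}})-[\mathsf{A}_{\mathsf{E}},\mathsf{A}_{\mathsf{E}'}]$, and the two derivation terms vanish since the matrix entries are constant. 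Hence $\mathsf{E}\mapsto\mathsf{A}_{\mathsf{E}}$ sends the Lie bracket to minus the matrix commutator, so it suffices to prove that the six matrices generate, under the commutator, a Lie algebra isomorphic to $\mathrm{sl}_2(\mathbb{C})\oplus\mathrm{sl}_2(\mathbb{C})$ (negating the bracket does not change the isomorphism type).

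Conceptually, I would then identify this matrix Lie algebra with the full orthogonal Lie algebra $\mathfrak{so}(\Phi)=\{\mathfrak{m}\in\mathrm{Mat}(4,\mathbb{C}):\mathfrak{m}\Phi+\Phi\mathfrak{m}^{\mathrm{tr}}=0\}$ of the pairing $\Phi$. The generators $\mathfrak{g}_1,\dots,\mathfrak{g}_4$ lie in $\mathfrak{so}(\Phi)$ by the defining property \eqref{eq:la} of $\mathrm{Lie}(\mathsf{G})$, while $\mathsf{A}_{\mathsf{R}_a}\in\mathfrak{so}(\Phi)$ because the Gauss-Manin connection preserves the intersection pairing, so that $\mathsf{A}_{\mathsf{R}_a}\Phi+\Phi\mathsf{A}_{\mathsf{R}_a}^{\mathrm{tr}}=0$ (a direct check from the explicit $\mathsf{A}_{\mathsf{R}_a}$ and $\Phi$). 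Since $C_{HL}\neq 0$ the form $\Phi$ is non-degenerate, so $\dim_{\mathbb{C}}\mathfrak{so}(\Phi)=\binom{4}{2}=6$. The six generators are manifestly linearly independent (they occupy disjoint entry patterns, using $C_{HL}\neq0$), hence form a basis of $\mathfrak{so}(\Phi)$; the Lie algebra they generate is therefore all of $\mathfrak{so}(\Phi)$, and $\mathfrak{so}(\Phi)\cong\mathfrak{so}_4(\mathbb{C})\cong\mathrm{sl}_2(\mathbb{C})\oplus\mathrm{sl}_2(\mathbb{C})$ by the standard exceptional isomorphism for type $D_2$.

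To make the two factors explicit and tie them to the two modular parameters of Theorem \ref{thm:LY}, I would diagonalize the adjoint action of the commuting pair $\mathfrak{g}_1,\mathfrak{g}_2$. Here $\mathrm{ad}_{\mathfrak{g}_1}$ grades the generators, acting as $+1$ on $\mathsf{A}_{\mathsf{R}_1},\mathsf{A}_{\mathsf{R}_2}$, as $-1$ on $\mathfrak{g}_3,\mathfrak{g}_4$, and as $0$ on $\mathfrak{g}_2$, while $\mathrm{ad}_{\mathfrak{g}_2}$ has eigenvalues $\pm C_{HL}$ and splits each of $\mathrm{span}(\mathsf{A}_{\mathsf{R}_1},\mathsf{A}_{\mathsf{R}_2})$ and $\mathrm{span}(\mathfrak{g}_3,\mathfrak{g}_4)$ into one-dimensional weight spaces $e_\pm$ and $f_\pm$. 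Pairing the root vectors across the two signs produces the triples
\begin{equation*}
\{E_1=e_+,\ F_1\propto f_-,\ H_1=\mathfrak{g}_1+C_{HL}^{-1}\mathfrak{g}_2\},\qquad \{E_2=e_-,\ F_2\propto f_+,\ H_2=\mathfrak{g}_1-C_{HL}^{-1}\mathfrak{g}_2\},
\end{equation*}
which I would check satisfy $[H_i,E_i]=2E_i$, $[H_i,F_i]=-2F_i$, $[E_i,F_i]=H_i$ after fixing the normalizations of $F_i$. The splitting into two decoupled copies is made possible by the variables $\tau_1=t_1$ and $\tau_2=t_1+t_2$ of Theorem \ref{thm:LY}, together with the factorization of the intersection quadratic form $\sum_{a,b}\mathsf{C}^{alg}_{ab}x_ax_b=C_{HH}\,x_1(x_1+x_2)$ into two independent linear forms (which uses $C_{LL}=0$ and $C_{HH}=2C_{HL}$).

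The hard part is verifying that the two triples commute, i.e.\ that the four cross brackets $[E_1,E_2],[E_1,F_2],[F_1,E_2],[F_1,F_2]$ all vanish. The first and last reduce to $[\mathsf{A}_{\mathsf{R}_1},\mathsf{A}_{\mathsf{R}_2}]=0$ and $[\mathfrak{g}_3,\mathfrak{g}_4]=0$, which are immediate from the explicit forms; the mixed brackets reduce, after expanding in $\mathfrak{g}_1,\mathfrak{g}_2$, to cancellations that rely on $\mathsf{C}^{alg}_{22}=C_{LL}=0$ (in particular $[\mathsf{A}_{\mathsf{R}_2},\mathfrak{g}_4]=0$). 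This degeneracy of the middle intersection block, equivalent to the factorization of the quadratic form above, is the structural reason the Gauss-Manin Lie algebra decouples. If one prefers the conceptual route of the second paragraph, this computation can be bypassed entirely, the only genuine checks being $\mathsf{A}_{\mathsf{R}_a}\in\mathfrak{so}(\Phi)$ and the linear independence of the six generators.
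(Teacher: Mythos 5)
Your proposal is correct, and it reaches the theorem by a genuinely different route from the paper. The paper's proof is a bare-hands exhibition: it introduces the constant matrix $A$ (with $C_{HL}^{-1}$ in the middle block), forms $\mathcal{J}^a=A\cdot(\mathfrak{g}_1\pm\mathfrak{g}_2)$, $\mathcal{J}^a_-=A\cdot\mathfrak{g}_3,\,A\cdot\mathfrak{g}_4$, $\mathcal{J}^a_+=A\cdot\mathsf{A}_{\mathsf{R}_2},\,A\cdot\mathsf{A}_{\mathsf{R}_1}$, observes that these are the standard generators of $\mathrm{sl}_2(\mathbb{C})\oplus\mathrm{sl}_2(\mathbb{C})$ in the tensor representation on $\mathbb{C}^2\otimes\mathbb{C}^2$, and reads off the relations. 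You instead stay inside the algebra generated by the six matrices: either identify its span with $\mathfrak{so}(\Phi)$ (all six satisfy $\mathfrak{m}\Phi+\Phi\mathfrak{m}^{\rm tr}=0$, they are linearly independent, and $\dim\mathfrak{so}(\Phi)=6$) and invoke the $D_2$ isomorphism, or build two commuting $\mathrm{sl}_2$-triples out of $\mathrm{ad}_{\mathfrak{g}_1},\mathrm{ad}_{\mathfrak{g}_2}$ weight vectors. Both of your routes are sound; I verified the key brackets, e.g.\ $[\mathfrak{g}_2,\mathsf{A}_{\mathsf{R}_2}]=C_{HL}\mathsf{A}_{\mathsf{R}_2}$, $[\mathsf{A}_{\mathsf{R}_2},\mathfrak{g}_3-\mathfrak{g}_4]=C_{HL}\mathfrak{g}_1+\mathfrak{g}_2$, and the vanishing of all cross brackets, which indeed hinges on $C_{LL}=0$ and $C_{HH}=2C_{HL}$ exactly as you say. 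What your approach buys is more than conceptual economy: left multiplication $X\mapsto AX$ is not a Lie-algebra homomorphism, and on these generators it genuinely fails to intertwine brackets --- the paper's dictionary sends $\mathsf{A}_{\mathsf{R}_1}$ and $\mathfrak{g}_3$ to $\mathcal{J}^2_+$ and $\mathcal{J}^1_-$, which lie in different $\mathrm{sl}_2$ factors and commute, whereas $[\mathsf{A}_{\mathsf{R}_1},\mathfrak{g}_3]=C_{HH}\,\mathfrak{g}_1\neq0$. So the paper's $\mathcal{J}$'s span an isomorphic copy of the algebra rather than arise from a change of basis of the algebra itself, and an internal argument like yours (with the bracket-correct pairing $\mathsf{A}_{\mathsf{R}_2}\leftrightarrow\mathfrak{g}_3-\mathfrak{g}_4$ and $\mathsf{A}_{\mathsf{R}_1}-\mathsf{A}_{\mathsf{R}_2}\leftrightarrow\mathfrak{g}_4$) is what makes the proof airtight; your opening reduction via flatness, $\mathsf{A}_{[\mathsf{E},\mathsf{E}']}=\mathsf{E}(\mathsf{A}_{\mathsf{E}'})-\mathsf{E}'(\mathsf{A}_{\mathsf{E}})-[\mathsf{A}_{\mathsf{E}},\mathsf{A}_{\mathsf{E}'}]$, also handles cleanly a step the paper leaves implicit. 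Moreover, the $\mathfrak{so}(\Phi)$ identification scales: for a lattice polarized K3 with $m$-dimensional moduli the same argument would yield $\mathfrak{so}(m+2,\mathbb{C})$, which speaks directly to the classification question raised in the paper's conclusion. Two small points for the write-up: the condition in \eqref{eq:la} should carry the transpose, as in your definition of $\mathfrak{so}(\Phi)$ (that is the condition the explicit matrices satisfy), and ``disjoint entry patterns'' is slightly loose for the pairs $\mathfrak{g}_3,\mathfrak{g}_4$ and $\mathsf{A}_{\mathsf{R}_1},\mathsf{A}_{\mathsf{R}_2}$, which overlap in the $(2,1),(3,1)$ and $(2,4)$ slots; independence instead follows from the entries at $(4,2)$ versus $(4,3)$ and $(1,2)$ versus $(1,3)$.
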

	\begin{proof}
		Let
		\begin{equation}
			A=\begin{pmatrix}
				1&0&0&0\\
				0&\frac{w_1w_2}{2d}&-2\frac{w_1w_2}{2d}&0\\
				0&0&\frac{w_1w_2}{2d}&0\\
				0&0&0&1
			\end{pmatrix}.
		\end{equation}
		Here $\frac{w_1w_2}{2d}=C_{HL}^{-1}$, as in Section \ref{sec:ms}. The Lie algebra is given by the generators
		\begin{equation}
			\begin{split}
				\mathcal{J}^1=A\cdot (\mathfrak{g}_1+\mathfrak{g}_2)&=\begin{pmatrix}
					1&0&0&0\\0&1&0&0\\0&0&-1&0\\0&0&0&-1
				\end{pmatrix},\quad \mathcal{J}^2=A\cdot (\mathfrak{g}_1-\mathfrak{g}_2)=\begin{pmatrix}
					1&0&0&0\\0&-1&0&0\\0&0&1&0\\0&0&0&-1
				\end{pmatrix},\\
				\mathcal{J}^1_-=A\cdot \mathfrak{g}_3&=\begin{pmatrix}
					0&0&0&0\\0&0&0&0\\1&0&0&0\\0&1&0&0
				\end{pmatrix},\quad \mathcal{J}^2_-=A\cdot \mathfrak{g}_4=\begin{pmatrix}
					0&0&0&0\\1&0&0&0\\0&0&0&0\\0&0&1&0
				\end{pmatrix},\\
				\mathcal{J}^1_+=A\cdot \mathsf{A}_{\mathsf{R}_2}&=\begin{pmatrix}
					0&0&1&0\\0&0&0&1\\0&0&0&0\\0&0&0&0
				\end{pmatrix}\quad \mathcal{J}^2_+=A\cdot \mathsf{A}_{\mathsf{R}_1}=\begin{pmatrix}
					0&1&0&0\\0&0&0&0\\0&0&0&1\\0&0&0&0
				\end{pmatrix}.
			\end{split}
		\end{equation}
		which form a basis of $\mathrm{sl}_2(\mathbb{C})\oplus\mathrm{sl}_2(\mathbb{C})$, with commutation relations
		\begin{equation}
			[\mathcal{J}^a_+,\mathcal{J}^a_-]=\mathcal{J}^a,\quad [\mathcal{J}^a_0,\mathcal{J}^a_+]=\mathcal{J}^a_+,\quad
			[\mathcal{J}^a_0,\mathcal{J}^a_-]=-\mathcal{J}^a_-,\quad
			[\mathcal{J}^1_\bullet,\mathcal{J}^2_\bullet]=0,\quad a=1,2,
		\end{equation}
		where $\bullet$ denotes any generator.
	\end{proof}
	
	\section{Conclusions}
	
	Movasati's work on the enhanced moduli space of elliptic curves \cite{Movasati:20113} provided the VHS and algebraic context for quasi-modular forms with the corresponding $\mathrm{sl}_2(\mathbb{C})$ Lie algebra. The general automorphic or modular properties for moduli spaces of CY threefolds such as the quintic are less clear (see Ref.~\cite{Movasati:2017}), although the analogous enhanced moduli spaces and Gauss-Manin Lie algebras have been put forward. Lattice polarized K3 manifolds therefore provide the middle grounds between the classical theory of quasi modular forms and new structures appearing in the moduli spaces of generic threefolds. We should note, that in various limiting constructions both of elliptically fibered CY manifolds as well as non-compact CY manifolds, connections to classical quasi modular forms have been worked out, see Ref.~\cite{Haghighat:2017} for a review. Correspondingly $\mathrm{sl}_2(\mathbb{C})$ Lie-subalgebras of the full Gauss-Manin (GMCD) Lie algebras have been put forward in the context of elliptic fibrations \cite{Haghighat:2015}. A different kind of universal $\mathrm{sl}_2(\mathbb{C})$
	Lie sub-algebra of the Gauss-Manin Lie algebra stemming from the rescaling of the holomorphic top form of the Griffiths-Dwork family of CY $d$-folds has been studied in Ref.~\cite{Nikdelan:2017}. Our result provides on the other hand the full Gauss-Manin Lie algebra which is beyond, yet very close to the classical one, reducing to a direct sum of two copies the classical one. It would be interesting to investigate the Gauss-Manin Lie algebra in detail for more intricate geometries of lattice polarized K3 manifolds and perhaps obtain a complete classification of the Lie algebras that can be obtained in this way. The description of the Gauss-Manin Lie algebras in terms of the data of the intersection form which is needed for that was already given in Ref.~\cite{Alim:2014}. We note moreover that we expect the study of mirror families with three-dimensional moduli spaces of complex structures to provide the arena for the study of the analogs of quasi-modular forms for Siegel modular forms as alluded to in Ref.~\cite{Doran:2014}. A further possible line of investigation would be to understand the relevance of the enhanced moduli space $\mathsf{T}$ for the enumerative geometry of the Calabi-Yau threefolds that admit a K3 fibration, as demonstrated in Ref.~\cite{Oberdieck:2017}. In a similar way the differential ring of $T^2$ was exploited in studying the threefold K3$\times T^2$, which lead to a novel computation of certain BPS degeneracies in Ref.~\cite{Kachru:2017} and the proof of Igusa cusp form conjecture in Ref.~\cite{Oberdieck:2018}. 
	
	
	\appendix
	
	\section{Ramanujan-Serre differential ring for congruence subgroups}
	Let $\mathbb{H}=\left\{\tau~|~\mathrm{Im}(\tau)>0\right\}$ be the upper half plane and the group $SL_2(\mathbb{Z})$ the group of matrices $\begin{pmatrix}a&b\\c&d\end{pmatrix}$ with integer entries satisfying $ad-bc=1$. The genus zero congruence subgroups of $SL_2(\mathbb{Z})$ are defined as
	\begin{equation}
		\Gamma_0(N)=\left\{\begin{pmatrix}a&b\\c&d\end{pmatrix}\in SL_2(\mathbb{Z})|c\equiv0 ~\mathrm{mod}~N\right\}.
	\end{equation}
	We define weight one modular forms associated to the congruence subgroups:
	\begin{table}[h]\label{tab:tab3}
		\centering
		\begin{tabular}{c|c  c  c}
			$N$&$A$&$B$&$C$\\
			1&$E_4(\tau)^{1/4}$&$\left(\frac{E_4(\tau)^{3/2}+E_6(\tau)}{2}\right)^{1/6}$&$\left(\frac{E_4(\tau)^{3/2}-E_6(\tau)}{2}\right)^{1/6}$\\
			2&$\frac{(64\eta(2\tau)^{24}+\eta(\tau)^{24})^{1/4}}{\eta(\tau)^2\eta(2\tau)^2}$&$\frac{\eta(\tau)^4}{\eta(2\tau)^2}$&$2^{3/2}\frac{\eta(2\tau)^4}{\eta(\tau)^2}$\\
			3&$\frac{(27\eta(3\tau)^{12}+\eta(\tau)^{12})^{1/3}}{\eta(\tau)\eta(3\tau)}$&$\frac{\eta(\tau)^3}{\eta(3\tau)}$&$3\frac{\eta(3\tau)^3}{\eta(\tau)}$
		\end{tabular}
	\end{table}\\
	\noindent Where $E_4(\tau)$ and $E_6(\tau)$ denote the Eisenstein series and $\eta(\tau)$ denotes the Dedekind $\eta$-function. Define also the analogue of the Eisenstein series $E_2$
	\begin{equation}\label{eq:ee}
		E=\partial_\tau\log B^rC^r,
	\end{equation}
	where $r=6$ for $N=1$, $r=4$ for $N=2$ and $r=3$ for $N=3$. The ring of quasi-modular forms for genus zero congruence subgroups is $\widetilde{\mathsf{M}}(\Gamma_0(N))=\mathbb{C}[A,B,E]$.
	
	From the Ramanujan-Serre differential ring of Eisenstein series we deduce the following differential relations
	\begin{equation}\label{eq:ring}
		\begin{split}
			\partial_\tau A&=\frac{1}{2r}A\left(E+\frac{C^r-B^r}{A^{r-2}}\right),\\
			\partial_\tau B&=\frac{1}{2r}B(E-A^2),\\
			\partial_\tau C&=\frac{1}{2r}C(E+A^2),\\
			\partial_\tau E&=\frac{1}{2r}(E^2-A^4).
		\end{split}
	\end{equation}
	
	The $j$-function for genus zero congruence subgroups $\Gamma_0(N)$ reads
	\begin{equation}\label{eq:jfctn}
		j=\frac{d_NA^{2r}}{C^r(A^r-C^r)},
	\end{equation}
	where $d_N=432$ for $\Gamma_0(1)$, $d_N=64$ for $\Gamma_0(2)$ and $d_N=27$ for $\Gamma_0(3)$. Note that $d_N=\mu\nu^2$ in \eqref{eq:pff}, which is to be expected, as in the limit $z_2\rightarrow0$ the system reduces to the Picard-Fuchs equation of the elliptic curve.
	
	\section{Gauss-Manin connection matrices}
	The entries $(G_i)_{4j}$ of the Gauss-Manin connection matrices are
	\begin{equation}
		\begin{split}
			(G_1)_{41}&=\frac{\mu(1-\nu)z_1(2(1-\Delta_1)-1)}{\Delta_1^2+(\Delta_2-1)(\Delta_1-1)^2},\\
			(G_1)_{42}&=\frac{\mu z_1((1-\nu)((2-\Delta_1)\Delta_2-2)+\nu^2(2\Delta_1\Delta_2-1))}{\Delta_1^2+(\Delta_2-1)(\Delta_1-1)^2},\\
			(G_1)_{43}&=\frac{2\mu(1-\nu)z_1\Delta_2}{\Delta_1^2+(\Delta_2-1)(\Delta_1-1)^2},\\
			(G_1)_{44}&=\frac{3\mu\nu^2z_1(1-(1-\Delta_1)\Delta_2)}{\Delta_1^2+(\Delta_2-1)(\Delta_1-1)^2},
		\end{split}
	\end{equation}
	and
	\begin{equation}
		\begin{split}
			(G_2)_{41}&=\frac{\mu(1-\nu)(1-\Delta_1)z_1(1-(1+\Delta_1)\Delta_2)}{2(\Delta_1^2+(\Delta_2-1)(\Delta_1-1)^2)},\\
			(G_2)_{42}&=\frac{\mu z_1((1-\nu)(1-\Delta_2)-\nu^2(1-\Delta_1)(1-(1+\Delta_1)\Delta_2)}{2(\Delta_1^2+(\Delta_2-1)(\Delta_1-1)^2)},\\
			(G_2)_{43}&=-\frac{\mu(1-\nu)z_1\Delta_1\Delta_2}{\Delta_1^2+(\Delta_2-1)(\Delta_1-1)^2},\\
			(G_2)_{44}&=\frac{(1-\Delta_1)(-1+(1-\Delta_1)(3\Delta_2-1)-(1-\Delta_1^2)\Delta_2)}{2(\Delta_1^2+(\Delta_2-1)(\Delta_1-1)^2)}.
		\end{split}
	\end{equation}

	\small
\newcommand{\etalchar}[1]{$^{#1}$}


\begin{thebibliography}{bib}
	
	\bibitem[AL07]{Alim:2007}
	Murad Alim and Jean~D. Lange.
	\newblock {Polynomial structure of the (open) topological string partition
		function}.
	\newblock {\em JHEP}, 10:045, 2007.
	
	\bibitem[Ali17]{Alim:2014}
	Murad Alim.
	\newblock {Algebraic structure of $tt^*$ equations for Calabi-Yau sigma
		models}.
	\newblock {\em Commun. Math. Phys.}, 353(3):963--1009, 2017.
	
	\bibitem[AMSY16]{Alim:2016}
	Murad Alim, Hossein Movasati, Emanuel Scheidegger, and Shing-Tung Yau.
	\newblock {Gauss-Manin connection in disguise: Calabi-Yau threefolds}.
	\newblock {\em Commun. Math. Phys.}, 344(3):889--914, 2016.
	
	\bibitem[Bat94]{Batyrev:1994}
	Victor~V. Batyrev.
	\newblock {Dual polyhedra and mirror symmetry for Calabi-Yau hypersurfaces in
		toric varieties}.
	\newblock {\em J. Alg. Geom.}, 3:493--545, 1994.
	
	\bibitem[BB96]{Batyrev:1996}
	Victor~V. Batyrev and Lev~A. Borisov.
	\newblock {On Calabi-Yau complete intersections in toric varieties}.
	\newblock {\em Higher-dimensional complex varieties (Trento, 1994)}, pages
	39--65, 1996.
	
	\bibitem[BCOV94]{Bershadsky:1994}
	Michael Bershadsky, Sergio Cecotti, Hirosi Ooguri, and Cumrun Vafa.
	\newblock {Kodaira-Spencer theory of gravity and exact results for quantum
		string amplitudes}.
	\newblock {\em Commun. Math. Phys.}, 165(2):311--427, 1994.
	
	\bibitem[CDF{\etalchar{+}}93]{Ceresole:1993}
	Anna~T. Ceresole, Riccardo D'Auria, Sergio Ferrara, Wolfgang Lerche, Jan Louis,
	and Tullio~E. Regge.
	\newblock {Picard-Fuchs equations, special geometry and target space duality}.
	\newblock Technical Report CERN-TH-7055-93. POLFIS-TH-93-9, CERN, 1993.
	
	\bibitem[CdlOF{\etalchar{+}}94]{Candelas:1993}
	Philip Candelas, Xenia de~la Ossa, Anamaria Font, Sheldon~H. Katz, and David~R.
	Morrison.
	\newblock {Mirror symmetry for two parameter models. 1.}
	\newblock {\em Nucl. Phys.}, B416:481--538, 1994.
	
	\bibitem[CdlOGP91]{Candelas:1991}
	Philip Candelas, Xenia de~la Ossa, Paul~S. Green, and Linda Parkes.
	\newblock {A pair of Calabi-Yau manifolds as an exactly soluble superconformal
		theory}.
	\newblock {\em Nucl. Phys.}, B359(1):21--74, 1991.
	
	\bibitem[CFKM94]{Candelas:1994}
	Philip Candelas, Anamaria Font, Sheldon~H. Katz, and David~R. Morrison.
	\newblock {Mirror symmetry for two parameter models. 2.}
	\newblock {\em Nucl. Phys.}, B429:626--674, 1994.
	
	\bibitem[CK99]{cox:1999}
	David~A. Cox and Sheldon Katz.
	\newblock {\em Mirror symmetry and algebraic geometry}, volume~68 of {\em
		Mathematical Surveys and Monographs}.
	\newblock American Mathematical Society, Providence, RI, 1999.
	
	\bibitem[CV91]{Cecotti:1991}
	Sergio Cecotti and Cumrun Vafa.
	\newblock {Topological--anti-topological fusion}.
	\newblock {\em Nucl. Phys.}, B367:359--461, 1991.
	
	\bibitem[DHMW16]{Doran:2014}
	Charles~F. Doran, Andrew Harder, Hossein Movasati, and Ursula Whitcher.
	\newblock Humbert surfaces and the moduli of lattice polarized {K}3 surfaces.
	\newblock In {\em String-{M}ath 2014}, volume~93 of {\em Proc. Sympos. Pure
		Math.}, pages 109--140. Amer. Math. Soc., Providence, RI, 2016.
	
	\bibitem[Dol96]{Dolgachev:1996}
	Igor~V. Dolgachev.
	\newblock {Mirror symmetry for lattice polarized K3 surfaces}.
	\newblock {\em J. Math. Sci.}, 81(3):2599--2630, 1996.
	
	\bibitem[Dol13]{Dolgachev:2013}
	Igor~V Dolgachev.
	\newblock {Lectures on moduli and mirror symmetry of K3 surfaces}.
	\newblock In {\em Workshop on Moduli in Hamburg}, 2013.
	
	\bibitem[Dor00]{Doran:2000}
	Charles~F. Doran.
	\newblock {Picard--Fuchs uniformization and modularity of the mirror map}.
	\newblock {\em Commun. Math. Phys.}, 212(3):625--647, 2000.
	
	\bibitem[Dub93]{Dubrovin:1992}
	Boris Dubrovin.
	\newblock {Geometry and integrability of topological--antitopological fusion}.
	\newblock {\em Commun. Math. Phys.}, 152:539--564, 1993.
	
	\bibitem[Fre99]{Freed:1997}
	Daniel~S. Freed.
	\newblock {Special K\"ahler manifolds}.
	\newblock {\em Commun. Math. Phys.}, 203:31--52, 1999.
	
	\bibitem[GKZ89]{Gel:1989}
	Izrail~M. Gel'fand, Mikhail~M. Kapranov, and Andrei~V. Zelevinskii.
	\newblock Hypergeometric functions and toral manifolds.
	\newblock {\em Funct. Anal. Appl.}, 23(2):94--106, 1989.
	
	\bibitem[Gro66]{Grothendieck:1966}
	Alexander Grothendieck.
	\newblock On the de {R}ham cohomology of algebraic varieties.
	\newblock {\em Inst. Hautes \'{E}tudes Sci. Publ. Math.}, (29):95--103, 1966.
	
	\bibitem[Hag17]{Haghighat:2017}
	Babak Haghighat.
	\newblock {Mirror symmetry and modularity}.
	\newblock {\em arXiv:1712.00601}, 2017.
	
	\bibitem[Her03]{Hertling:2002}
	Claus Hertling.
	\newblock {$tt^*$} geometry, {F}robenius manifolds, their connections, and the
	construction for singularities.
	\newblock {\em J. Reine Angew. Math.}, 555:77--161, 2003.
	
	\bibitem[HKK{\etalchar{+}}03]{Hori:2003}
	Kentaro Hori, Sheldon Katz, Albrecht Klemm, Rahul Pandharipande, Richard
	Thomas, Cumrun Vafa, Ravi Vakil, and Eric Zaslow.
	\newblock {\em Mirror symmetry}, volume~1 of {\em Clay Mathematics Monographs}.
	\newblock American Mathematical Society, Providence, RI; Clay Mathematics
	Institute, Cambridge, MA, 2003.
	
	\bibitem[HKTY95a]{Hosono:1993}
	Shinobu Hosono, Albrecht Klemm, Stefan Theisen, and Shing-Tung Yau.
	\newblock {Mirror symmetry, mirror map and applications to Calabi-Yau
		hypersurfaces}.
	\newblock {\em Commun. Math. Phys.}, 167:301--350, 1995.
	
	\bibitem[HKTY95b]{Hosono:1994}
	Shinobu Hosono, Albrecht Klemm, Stefan Theisen, and Shing-Tung Yau.
	\newblock {Mirror symmetry, mirror map and applications to complete
		intersection Calabi-Yau spaces}.
	\newblock {\em Nucl. Phys.}, B433:501--554, 1995.
	
	\bibitem[HMY17]{Haghighat:2015}
	Babak Haghighat, Hossein Movasati, and Shing-Tung Yau.
	\newblock Calabi-{Y}au modular forms in limit: elliptic fibrations.
	\newblock {\em Commun. Number Theory Phys.}, 11(4):879--912, 2017.
	
	\bibitem[Hos00]{Hosono:2000}
	Shinobu Hosono.
	\newblock {Local mirror symmetry and type IIA monodromy of Calabi-Yau
		manifolds}.
	\newblock {\em Adv. Theor. Math. Phys.}, 4:335--376, 2000.
	
	\bibitem[Hos10]{Hosono:2008}
	Shinobu Hosono.
	\newblock B{COV} ring and holomorphic anomaly equation.
	\newblock In {\em New developments in algebraic geometry, integrable systems
		and mirror symmetry ({RIMS}, {K}yoto, 2008)}, volume~59 of {\em Adv. Stud.
		Pure Math.}, pages 79--110. Math. Soc. Japan, Tokyo, 2010.
	
	\bibitem[KO68]{Katz:1968}
	Nicholas~M. Katz and Tadao Oda.
	\newblock {On the differentiation of de Rham cohomology classes with respect to
		parameters}.
	\newblock {\em J. Math. Kyoto Univ.}, 8(2):199--213, 1968.
	
	\bibitem[Kod63]{Kodaira:1963}
	K.~Kodaira.
	\newblock On compact analytic surfaces. {II}, {III}.
	\newblock {\em Ann. of Math. {\rm (2) 77:563--626, 1963; ibid.}}, 78:1--40,
	1963.
	
	\bibitem[KT17]{Kachru:2017}
	Shamit Kachru and Arnav Tripathy.
	\newblock {Black Holes and Hurwitz Class Numbers}.
	\newblock {\em Int. J. Mod. Phys.}, D26(12):1742003, 2017.
	
	\bibitem[LY95]{Lian:19951}
	Bong~H. Lian and Shing-Tung Yau.
	\newblock {Mirror maps, modular relations and hypergeometric series I}.
	\newblock {\em hep-th/9507151}, 1995.
	
	\bibitem[LY96a]{Lian:1996}
	Bong~H. Lian and Shing-Tung Yau.
	\newblock Arithmetic properties of mirror map and quantum coupling.
	\newblock {\em Commun. Math. Phys.}, 176(1):163--191, 1996.
	
	\bibitem[LY96b]{Lian:19952}
	Bong~H. Lian and Shing-Tung Yau.
	\newblock {Mirror maps, modular relations and hypergeometric series II}.
	\newblock {\em Nucl. Phys. Proc. Suppl.}, 46:248--262, 1996.
	
	\bibitem[MN16]{Movasati:2016}
	Hossein Movasati and Younes Nikdelan.
	\newblock {Gauss-Manin connection in disguise: Dwork family}.
	\newblock {\em arXiv:1603.09411}, 2016.
	
	\bibitem[Mov12]{Movasati:20113}
	Hossein Movasati.
	\newblock Quasi-modular forms attached to elliptic curves, {I}.
	\newblock {\em Ann. Math. Blaise Pascal}, 19(2):307--377, 2012.
	
	\bibitem[Mov15]{Movasati:20111}
	Hossein Movasati.
	\newblock Modular-type functions attached to mirror quintic {C}alabi-{Y}au
	varieties.
	\newblock {\em Math. Z.}, 281(3-4):907--929, 2015.
	
	\bibitem[Mov17a]{Movasati:2017}
	Hossein Movasati.
	\newblock {\em {Gauss-Manin connection in disguise: Calabi-Yau modular forms}}.
	\newblock Surveys of modern mathematics. International Press of Boston, Inc.,
	2017.
	
	\bibitem[Mov17b]{Movasati:2014}
	Hossein Movasati.
	\newblock Gauss-{M}anin connection in disguise: {N}oether-{L}efschetz and
	{H}odge loci.
	\newblock {\em Asian J. Math.}, 21(3):463--481, 2017.
	
	\bibitem[Nik17]{Nikdelan:2017}
	Younes Nikdelan.
	\newblock {Modular vector fields attached to Dwork family: $sl_2(\mathbb{C})$
		Lie algebra}.
	\newblock {\em arXiv:1710.00438}, 2017.
	
	\bibitem[OP17]{Oberdieck:2017}
	Georg Oberdieck and Aaron Pixton.
	\newblock Gromov-witten theory of elliptic fibrations: Jacobi forms and
	holomorphic anomaly equations.
	\newblock {\em arXiv:1709.01481}, 2017.
	
	\bibitem[OP18]{Oberdieck:2018}
	Georg Oberdieck and Aaron Pixton.
	\newblock Holomorphic anomaly equations and the {I}gusa cusp form conjecture.
	\newblock {\em Invent. Math.}, 213(2):507--587, 2018.
	
	\bibitem[Roh04]{Rohsiepe:2004}
	Falk Rohsiepe.
	\newblock {Lattice polarized toric K3 surfaces}.
	\newblock {\em hep-th/0409290}, 2004.
	
	\bibitem[Str90]{Strominger:1990}
	Andrew Strominger.
	\newblock Special geometry.
	\newblock {\em Commun. Math. Phys.}, 133(1):163--180, 1990.
	
	\bibitem[YY04]{Yamaguchi:2004}
	Satoshi Yamaguchi and Shing-Tung Yau.
	\newblock {Topological string partition functions as polynomials}.
	\newblock {\em JHEP}, 07:047, 2004.
	
	\bibitem[YY07]{Yang:2007}
	Yifan Yang and Noriko Yui.
	\newblock {Differential equations satisfied by modular forms and $ K3 $
		surfaces}.
	\newblock {\em Illinois J. Math.}, 51(2):667--696, 2007.
	
	\bibitem[Zho13]{Zhou:2013}
	Jie Zhou.
	\newblock {Differential rings from special K\"ahler geometry}.
	\newblock {\em arXiv:1310.3555}, 2013.
	
\end{thebibliography}
\end{document}